\pdfoutput=1
\documentclass[a4paper,11pt]{amsart}

\usepackage[margin=1in,includehead,includefoot]{geometry}
\usepackage[utf8]{inputenc}
\usepackage[T1]{fontenc}
\usepackage{lmodern,microtype}
\usepackage{amsmath,amsthm,amssymb}
\usepackage{enumitem}
\usepackage{tikz}
\usepackage{hyperref,bookmark}

\hypersetup{colorlinks=true,linkcolor=black,citecolor=black,filecolor=black,urlcolor=black}

\makeatletter
\let\citationorig\citation
\def\citation#1{\citationorig{#1}\@for\@tempa:=#1\do{\@ifundefined{cit@\@tempa}{\global\@namedef{cit@\@tempa}{}}{}}}
\let\bibitemorig\bibitem
\def\bibitem#1{\@ifundefined{cit@#1}{\typeout{LaTeX Warning: Unused bibitem `#1'}}{}\bibitemorig{#1}}
\let\old@setaddresses\@setaddresses
\def\@setaddresses{\bigskip{\parindent 0pt\let\scshape\relax\let\ttfamily\relax\old@setaddresses}}

\setlength{\floatsep}{12pt plus 12pt}
\setlength{\abovecaptionskip}{8pt plus 8pt}
\makeatother

\newtheorem{theorem}{Theorem}
\newtheorem{lemma}{Lemma}
\newtheorem{proposition}{Proposition}
\newtheorem{claim}{Claim}
\theoremstyle{remark}
\newtheorem*{block-prop}{Block Restriction Property}
\newtheorem*{interval-prop}{Interval Property for Tails}

\renewenvironment{enumerate}{\begin{enumorig}[label=\textup{(\arabic*)}, noitemsep, topsep=1mm, labelindent=.5em, leftmargin=*]}{\end{enumorig}}

\newcommand{\bfk}{\mathbf{k}}
\newcommand{\bfn}{\mathbf{n}}
\newcommand{\cgB}{\mathcal{B}}
\newcommand{\cgF}{\mathcal{F}}
\newcommand{\cgR}{\mathcal{R}}

\DeclareMathOperator{\Inc}{Inc}
\DeclareMathOperator{\Max}{Max}

\let\leq\leqslant
\let\geq\geqslant

\linespread{1.08}
\linepenalty=200
\allowdisplaybreaks

\hypersetup{
  pdftitle={Dimension and cut vertices: an application of Ramsey theory},
  pdfauthor={William T. Trotter, Bartosz Walczak, Ruidong Wang},
}

\title[Dimension and cut vertices: an application of Ramsey theory]{Dimension and cut vertices:\\an application of Ramsey theory}

\author{William~T. Trotter\and Bartosz Walczak\and Ruidong Wang}

\address[William~T. Trotter]{School of Mathematics, Georgia Institute of Technology, Atlanta, GA, USA}
\email{\href{mailto:trotter@math.gatech.edu}{trotter@math.gatech.edu}}

\address[Bartosz Walczak]{Department of Theoretical Computer Science, Faculty of Mathematics and Computer Science, Jagiellonian University, Kraków, Poland}
\email{\href{mailto:walczak@tcs.uj.edu.pl}{walczak@tcs.uj.edu.pl}}

\address[Ruidong Wang]{Blizzard Entertainment, Irvine, CA, USA}
\email{\href{mailto:ruwang@blizzard.com}{ruwang@blizzard.com}}

\dedicatory{This paper is dedicated to Ronald~L. Graham in celebration of his 80th birthday and in appreciation\\for the distinguished leadership he has provided for combinatorial mathematics.}

\thanks{This paper appeared as Chapter 11 in: \href{https://doi.org/10.1017/9781316650295.012}{Steve Butler, Joshua Cooper, and Glenn Hurlbert (eds.), \emph{Connections in Discrete Mathematics:\ A Celebration of the Work of Ron Graham}, pp.~187--199, Cambridge University Press, Cambridge, 2018}.}
\thanks{Bartosz Walczak was partially supported by National Science Center of Poland grant 2011/03/N/ST6/03111.}

\begin{document}

\begin{abstract}
Motivated by quite recent research involving the relationship between the dimension of a poset and graph-theoretic properties of its cover graph, we show that for every $d\geq 1$, if $P$ is a poset and the dimension of a subposet $B$ of $P$ is at most $d$ whenever the cover graph of $B$ is a block of the cover graph of $P$, then the dimension of $P$ is at most $d+2$.
We also construct examples which show that this inequality is best possible.
We consider the proof of the upper bound to be fairly elegant and relatively compact.
However, we know of no simple proof for the lower bound, and our argument requires a powerful tool known as the Product Ramsey Theorem.
As a consequence, our constructions involve posets of enormous size.
\end{abstract}

\maketitle

\section{Introduction}

We assume that the reader is familiar with basic notation and terminology for partially ordered sets (here we use the short term \emph{posets}), including chains and antichains, minimal and maximal elements, linear extensions, order diagrams, and cover graphs.
Extensive background information on the combinatorics of posets can be found in \cite{Tro-book,Tro95}.

We will also assume that the reader is familiar with basic concepts of graph theory, including the following terms: connected and disconnected graphs, components, cut vertices, and $k$-connected graphs for an integer $k\geq 2$.
Recall that when $G$ is a connected graph, a connected induced subgraph $H$ of $G$ is called a \emph{block} of $G$ when $H$ is $2$-connected and there is no subgraph $H'$ of $G$ which contains $H$ as a proper subgraph and is also $2$-connected.

Here are the analogous concepts for posets.
A poset $P$ is said to be \emph{connected} if its cover graph is connected.
A subposet $B$ of $P$ is said to be \emph{convex} if $y\in B$ whenever $x,z\in B$ and $x<y<z$ in $P$.
Note that when $B$ is a convex subposet of $P$, the cover graph of $B$ is an induced subgraph of the cover graph of $P$.
A convex subposet $B$ of $P$ is called a \emph{component} of $P$ when the cover graph of $B$ is a component of the cover graph of $P$.
A convex subposet $B$ of $P$ is called a \emph{block} of $P$ when the cover graph of $B$ is a block in the cover graph of $P$.

Motivated by questions raised in recent papers exploring connections between the dimension of a poset $P$ and graph-theoretic properties of the cover graph of $P$, our main theorem will be the following result.

\begin{theorem}
\label{thm:main}
For every\/ $d\geq 1$, if\/ $P$ is a poset and every block in\/ $P$ has dimension at most\/ $d$, then the dimension of\/ $P$ is at most\/ $d+2$.
Furthermore, this inequality is best possible.
\end{theorem}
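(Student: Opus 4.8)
The plan is to split the theorem into its two halves and handle them with completely different techniques. For the upper bound, I would start with a connected poset $P$ (it suffices to treat each component separately, then take a maximum) and argue by induction on the number of blocks of its cover graph, the base case being a single block, which has dimension at most $d$ by hypothesis—leaving two units of slack. The key structural fact I would lean on is that the block tree of a connected graph has a leaf block $B$ meeting the rest of $G$ in a single cut vertex $v$; let $P'$ be the subposet obtained by deleting from $P$ all elements of $B$ except $v$, so $P'$ is convex, has strictly fewer blocks, hence dimension at most $d+2$ by induction. I would then fix a linear extension $L'$ of $P'$ witnessed through a realizer of size $d+2$, together with a realizer of $B$ of size $d$, and splice them: the heart of the argument is to build two additional linear extensions of $P$ that, together with appropriate common refinements of the given realizers, reverse every critical pair of $P$. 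The two ``extra'' dimensions should be exactly what is needed to correctly interleave elements of $B\setminus\{v\}$ relative to elements of $P'$ on the two sides of $v$ (those above $v$ and those incomparable to things on the far side), which is where the block structure—everything outside $B$ ``sees'' $B$ only through $v$—makes the gluing possible. I expect the bookkeeping of critical pairs across the cut vertex to be the delicate point, but not conceptually deep; the slack of two is generous enough that a clean splicing lemma should close it.

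For the lower bound, the task is to construct, for each $d$, a poset $P$ in which every block has dimension at most $d$ yet $\dim(P)=d+2$. The natural template is to take a standard high-dimensional poset—say a variant of the standard example $S_n$ or a suitable ``incidence'' poset—and surgically cut it along a vertex set into overlapping convex pieces so that each individual piece, viewed in isolation, has small dimension, while the global constraints forcing high dimension survive only because of the way the pieces share cut vertices. The difficulty, flagged already in the abstract, is that controlling the dimension of every block simultaneously while preserving a dimension-$(d+2)$ obstruction in the union is not something one can do by an explicit finite gadget; instead I would use the Product Ramsey Theorem to extract, from a sufficiently large product-structured poset, a sub-configuration that is ``monochromatic'' with respect to a coloring that records, for each tuple of potential critical pairs, whether it can be reversed inside a single block. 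Ramsey then guarantees a large homogeneous substructure on which the reversal behaviour is uniform, and a counting/parity argument on this homogeneous piece shows that no realizer of size $d+1$ of the whole poset can exist, while the block-wise bound of $d$ is maintained by construction.

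The main obstacle, and the reason the constructions are ``of enormous size,'' is precisely this Ramsey step: one has to set up the coloring so that a monochromatic product of the right dimensions encodes a genuine $(d+2)$-dimensional obstruction, and this forces the ground set to be astronomically large as a function of $d$ and of the Ramsey numbers involved. I would therefore devote most of the lower-bound section to (i) defining the base poset and the auxiliary coloring carefully, (ii) invoking the Product Ramsey Theorem with the correct parameters, and (iii) verifying on the homogeneous substructure both that every block still has dimension $\le d$ and that $\dim \ge d+2$. The upper-bound half I expect to be short and self-contained; the lower-bound half is where essentially all the work—and all the size blow-up—lives.
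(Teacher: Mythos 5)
Your lower-bound plan matches the paper's architecture in outline: the paper takes the base poset $\mathbf{n}^d$, attaches a four-element ``diamond'' at every element (so every block is either $\mathbf{n}^d$ itself or a diamond), colors $\mathbf{2}^d$-grids by a vector recording which of the $d+1$ hypothesized linear extensions reverses each of $d+2$ designated incomparable pairs, and applies the Product Ramsey Theorem with $m=3$ to reach a contradiction. You have not supplied the base poset, the gadgets, or the coloring, so this half is a correct strategy rather than a proof, but it is the paper's strategy.

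The upper-bound plan, however, has a genuine gap at its central step. You propose induction on the number of blocks: peel off a leaf block $B$ at a cut vertex $v$, take an \emph{arbitrary} realizer of size $d+2$ of $P'$ guaranteed by the inductive hypothesis, a realizer of size $d$ of $B$, and ``splice.'' The splicing lemma you would need is: if $P'$ and $B$ meet only in $v$, $\dim(P')\le d+2$, and $\dim(B)\le d$, then $\dim(P'\cup B)\le d+2$. Nothing in your sketch justifies this, and the inductive hypothesis is too weak to support it: the $d+2$ linear extensions of $P'$ were chosen with no reference to $B$, and the incomparable pairs $(x,y)$ with $x\in B\setminus\{v\}$ and $y$ in a tail hanging off some \emph{other} cut vertex of $P'$ must be reversed by extensions whose behavior on those distant tails you no longer control. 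This is exactly why the paper does not induct. Instead it builds, in one global pass over all blocks via the merge rule, $d$ linear extensions $L_1,\dots,L_d$ satisfying two strong invariants (each $L_j$ restricts to the chosen realizer on every block, and every tail $T(u,B_i)$ is an interval in every $L_j$), and only then adds two further extensions that reverse the residual set $R$; the partition $R=R_{d+1}\cup R_{d+2}$ is governed by a trichotomy (Claim~2) that refers to the unique ``separating block'' of each pair and to the auxiliary order $P^*$ realized by $\cgF$---information that an arbitrary realizer of $P'$ simply does not carry. To rescue your induction you would have to strengthen the inductive hypothesis to assert the existence of a realizer with precisely these structural properties, at which point you have essentially reproduced the paper's non-inductive argument. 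As written, the step ``a clean splicing lemma should close it'' is the theorem, not a lemma.
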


The remainder of this paper is organized as follows.
In the next section, we present a brief discussion of background material which serves to motivate this line of research and puts our theorem in historical perspective.
Section~\ref{sec:dimension} includes a compact summary of essential material from dimension theory.
Section~\ref{sec:upper-bound} contains the proof of the upper bound in our main theorem, and in Section~\ref{sec:best-possible}, we give a construction which shows that our upper bound is best possible.
This construction uses the Product Ramsey Theorem and produces posets of enormous size.
We close in Section~\ref{sec:closing} with some brief remarks about challenges that remain.

\section{Background motivation}

A family $\cgF=\{L_1,L_2,\ldots,L_d\}$ of linear extensions of a poset $P$ is called a \emph{realizer} of $P$ when $x\leq y$ in $P$ if and only if $x\leq y$ in $L_i$ for each $i=1,2,\ldots,d$.
The \emph{dimension} of $P$, denoted by $\dim(P)$, is the least positive integer $d$ for which $P$ has a realizer of size $d$.
For simplifying the details of arguments to follow, we consider families of linear extensions with repetition allowed.
So if $\dim(P)=d'$, then $P$ has a realizer of size $d$ for every $d\geq d'$.
For an integer $d\geq 2$, a poset $P$ is said to be \emph{$d$-irreducible} if $\dim(P)=d$ and $\dim(B)<d$ for every proper subposet $B$ of $P$.

As is well known, the dimension of a poset $P$ is just the maximum of the dimension of the components of $P$ \emph{except} when $P$ is the disjoint sum of two or more chains.
In the latter case, $\dim(P)=2$ while all components of $P$ have dimension $1$.
Accordingly, when $d\geq 3$, a poset $P$ with $\dim(P)=d$ has a component $Q$ with $\dim(Q)=d$.

It is easy to see that if the chromatic number of a connected graph $G$ is $r$ and $r\geq 2$, then there is a block $H$ of $G$ so that the chromatic number of $H$ is also $r$.
The analogous statement for posets is not true.
We show in Figure~\ref{fig:cut-vertices} representatives of two infinite families of posets.
When $n\geq 2$, the poset $P_n$ shown on the left side is $3$-irreducible (see \cite{TM76} or \cite{Kel77} for the full list of $3$-irreducible posets).
For each $m\geq 3$, the poset $Q_m$ shown on the right is $(m+1)$-irreducible.
This second example is part of an exercise given on page $20$ in \cite{Tro-book}.
Together, these examples show that for every $d\geq 2$, there are posets of dimension $d+1$ every block of which has dimension at most $d$.

\begin{figure}
\begin{center}
\begin{minipage}[b]{6.5cm}
\centering
\begin{tikzpicture}
  \tikzstyle{every node}=[circle,minimum size=4pt,inner sep=0pt,draw]
  \tikzstyle{every label}=[rectangle,label distance=3pt,draw=none]
  \node (a) at (0.25,0) {};
  \node[label=left:$1$] (b1) at (-2,1) {};
  \node[label=left:$2$] (b2) at (-1,1) {};
  \node[label=left:$3$] (b3) at (0,1) {};
  \node (b4) at (1.5,1) {};
  \node[rectangle,draw=none,yshift=-0.27pt] at (0.75,1) {$\cdots$};
  \node[label=left:$\vphantom{1}n$] (b5) at (2.5,1) {};
  \node (e) at (3.5,1.2) {};
  \node[draw=none] (b4') at (0.75,1.5) {};
  \node[draw=none] (c4') at (1.25,1.5) {};
  \node (c1) at (-2.5,2) {};
  \node (c2) at (-1.5,2) {};
  \node (c3) at (-0.5,2) {};
  \node (c4) at (0.5,2) {};
  \node[rectangle,draw=none,yshift=-0.27pt] at (1.25,2) {$\cdots$};
  \node (c5) at (2,2) {};
  \node (d) at (0.25,3) {};
  \path (a) edge (b1) edge (b2) edge (b3) edge (b4) edge[bend right=10] (e);
  \path (d) edge (c2) edge (c3) edge (c4) edge (c5) edge[bend left=12] (e);
  \path (b1) edge (c1) edge (c2);
  \path (b2) edge (c2) edge (c3);
  \path (b3) edge (c3) edge (c4);
  \path (b4) edge (c4') edge (c5);
  \path (b4') edge (c4);
  \path (b5) edge (c5);
  \node[rectangle,draw=none] at (0.25,-0.7) {$P_n$; $n\geq 2$};
\end{tikzpicture}
\end{minipage}\hskip 1cm
\begin{minipage}[b]{6cm}
\centering
\begin{tikzpicture}[xscale=1.25]
  \tikzstyle{every node}=[circle,minimum size=4pt,inner sep=0pt,draw]
  \tikzstyle{every label}=[rectangle,label distance=3pt,draw=none]
  \node (a) at (-0.5,0) {};
  \node (b1) at (-2,1) {};
  \node (b2) at (-1,1) {};
  \node (b3) at (0,1) {};
  \node[rectangle,draw=none,yshift=-0.27pt] at (0.75,1) {$\cdots$};
  \node (b4) at (1.5,1) {};
  \node (b5) at (2.5,1) {};
  \node (c1) at (-2,2) {};
  \node (c2) at (-1,2) {};
  \node (c3) at (0,2) {};
  \node[rectangle,draw=none,yshift=-0.27pt] at (0.75,2) {$\cdots$};
  \node (c4) at (1.5,2) {};
  \node (c5) at (2.5,2) {};
  \node[label=above:$1$] (d1) at (-2,3) {};
  \node[label=above:$2$] (d2) at (-1,3) {};
  \node[label=above:$3$] (d3) at (0,3) {};
  \node[rectangle,draw=none,yshift=-0.27pt] at (0.75,3) {$\cdots$};
  \node[label=above:$n$] (d4) at (1.5,3) {};
  \path (a) edge (b1) edge (b2) edge (b3) edge (b4);
  \path (c1) edge (b1) edge (d2) edge (d3) edge (d4);
  \path (c2) edge (d1) edge (b2) edge (d3) edge (d4);
  \path (c3) edge (d1) edge (d2) edge (b3) edge (d4);
  \path (c4) edge (d1) edge (d2) edge (d3) edge (b4);
  \path (c5) edge (b1) edge (b2) edge (b3) edge (b4) edge (b5);
  \node[rectangle,draw=none] at (0,-0.7) {$R_n$; $n\geq 3$};
\end{tikzpicture}
\end{minipage}
\end{center}
\caption{Irreducible posets with cut vertices}
\label{fig:cut-vertices}
\end{figure}
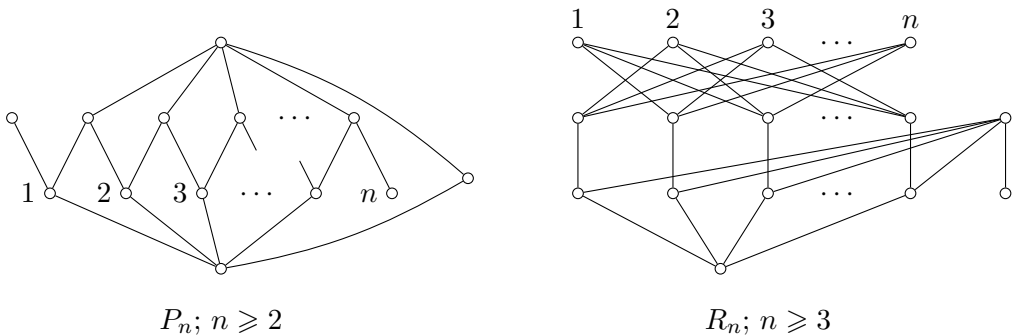

The following theorem is proved in \cite{TM77}.

\begin{theorem}
\label{thm:tree}
If\/ $P$ is a poset and the cover graph of\/ $P$ is a tree, then\/ $\dim(P)\leq 3$.
\end{theorem}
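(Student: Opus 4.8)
The plan is to construct a realizer of size three, $\{L_1,L_2,L_3\}$, directly, exploiting the fact that the order diagram of $P$---whose cover graph is a tree---admits an upward planar drawing. First I would fix such a drawing $\Gamma$: choose any linear extension $L_1$, place each element $x$ at vertical coordinate equal to its rank in $L_1$, so that all heights are distinct and every cover relation is drawn as a strictly ascending arc, and, because the cover graph is a tree, choose horizontal coordinates so that no two arcs cross. Recall that a family of linear extensions of $P$ is a realizer precisely when, for every critical pair $(x,y)$---an incomparable pair with $D(x)\subseteq D(y)$ and $U(y)\subseteq U(x)$---some member of the family reverses it; hence it suffices to produce $L_2$ and $L_3$ so that every critical pair is reversed by $L_2$ or by $L_3$.

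Next I would use $\Gamma$ to split the critical pairs into a ``left'' class and a ``right'' class, and take $L_2$ (resp.\ $L_3$) to be a linear extension of the relation obtained from $P$ by reversing every left (resp.\ right) critical pair. For a critical pair $(x,y)$, the unique $x$--$y$ path $\pi$ in the tree is not monotone, since $x\parallel y$; reversing $(x,y)$ amounts to adding an arc alongside $\pi$ in the drawing, which can be routed either to the left or to the right of $\pi$, and the classification records---by reading the planar rotation system of $\Gamma$ along $\pi$---which of the two routings is consistent with $\Gamma$. Granting that this is well defined, $\{L_1,L_2,L_3\}$ is a realizer, because every critical pair lies in one of the two classes and is therefore reversed by $L_2$ or by $L_3$.

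The step I expect to be the main obstacle is showing that each class is \emph{reversible}, that is, that reversing all the left critical pairs at once produces an acyclic relation, and likewise for the right ones---equivalently, that neither class contains a strict alternating cycle. Geometrically, such a cycle would close up into a closed curve in $\Gamma$, obtained by concatenating the tree paths of the pairs in the cycle with short ascending arcs realizing the comparabilities that link them; since every reversal arc is forced onto the \emph{same} side of its own path, this curve would have to enclose a region and then meet itself, contradicting the planarity of $\Gamma$. The delicate points are to choose the left/right convention so that ``left'' is genuinely a globally consistent side of each path and to control how the paths of different pairs overlap along the tree. An alternative route is induction on $|P|$: delete a leaf $v$ of the tree (say a maximal element of $P$, covering a unique element $u$), apply the inductive hypothesis to $P-v$, and reinsert $v$; but this forces one to strengthen the hypothesis with an invariant---for instance, that every element incomparable to $v$ already appears, in at least one of the three linear extensions, after the down-set of $u$---and maintaining that invariant through all the leaf deletions is essentially the same difficulty.
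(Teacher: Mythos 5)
There is a fatal problem with the plan as stated, before one even reaches the step you flag as the main obstacle. You propose to partition \emph{all} critical pairs into a ``left'' class and a ``right'' class and to reverse each class in a single linear extension ($L_2$ and $L_3$ respectively), with $L_1$ serving only to fix the drawing. But if every critical pair of $P$ is reversed by $L_2$ or by $L_3$, then $\{L_2,L_3\}$ is already a realizer and $\dim(P)\le 2$ --- which is false: the two posets in Figure~\ref{fig:3dim-trees} have tree cover graphs and dimension $3$. So no choice of left/right convention can make both classes reversible; any correct version of the scheme must let $L_1$ absorb some critical pairs (those $(x,y)$ with $y<x$ in $L_1$) and split only the remainder. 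Even after that repair, the proof is incomplete exactly where you say it is: you give no precise definition of the side of a critical pair (the tree path from $x$ to $y$ is non-monotone, and ``the routing consistent with $\Gamma$'' is not pinned down), and the planarity argument against alternating cycles is only a heuristic --- the paths of distinct pairs in a cycle may share edges of the tree, the comparabilities $x_i\le y_{i+1}$ are themselves realized by tree paths rather than short arcs, and the resulting closed walk need not be a simple curve bounding a region. As it stands, the argument establishes nothing beyond the (correct) reduction to covering the incomparable pairs by reversible sets.

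For comparison, the paper obtains Theorem~\ref{thm:tree} as the case $d=1$ of Theorem~\ref{thm:main} and uses no planarity at all: each block of a tree is a $2$-element chain, so merging one linear extension per block along the cut vertices (the merge rule) produces a single linear extension $L_1$ of $P$ in which every tail is an interval; the pairs $(x,y)\in\Inc(P)$ with $x<y$ in $L_1$ are then split into two sets according to which alternative of Claim~\ref{cla:2} holds, and reversibility of each set follows from the acyclicity of the auxiliary order $P^*$, not from a drawing. If you wish to pursue a geometric proof, you will essentially need to rediscover an invariant of this kind to decide which leftover pairs go to $L_2$ and which to $L_3$.
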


In Figure~\ref{fig:3dim-trees}, we show two posets whose cover graphs are trees.
These examples appear in \cite{TM77}, and we leave it as an exercise to verify that each of them has dimension $3$.
Accordingly, the inequality in Theorem~\ref{thm:tree} is best possible.
In the language of this paper, we note that when the cover graph of $P$ is a tree and $|P|\geq 2$, then every block of $P$ is a $2$-element chain and has dimension $1$.
Accordingly, in the case $d=1$, our main theorem reduces to a result which has been known for nearly $40$ years.
However, we emphasize that the proof we give in Section~\ref{sec:upper-bound} of the upper bound in Theorem~\ref{thm:main} is not inductive and works for all $d\geq 1$ simultaneously.
For this reason, it provides a new proof of Theorem~\ref{thm:tree} as a special case.

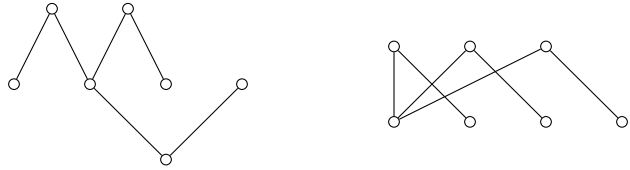
\begin{figure}
\begin{center}
\begin{tikzpicture}
  \tikzstyle{every node}=[circle,minimum size=4pt,inner sep=0pt,draw]
  \node (a) at (0,0) {};
  \node (b1) at (-2,1) {};
  \node (b2) at (-1,1) {};
  \node (b3) at (0,1) {};
  \node (b4) at (1,1) {};
  \node (c1) at (-1.5,2) {};
  \node (c2) at (-0.5,2) {};
  \path (a) edge (b2) edge (b4);
  \path (c1) edge (b1) edge (b2);
  \path (c2) edge (b2) edge (b3);
  \node (x1) at (3,0.5) {};
  \node (x2) at (4,0.5) {};
  \node (x3) at (5,0.5) {};
  \node (x4) at (6,0.5) {};
  \node (y1) at (3,1.5) {};
  \node (y2) at (4,1.5) {};
  \node (y3) at (5,1.5) {};
  \path (y1) edge (x1) edge (x2);
  \path (y2) edge (x1) edge (x3);
  \path (y3) edge (x1) edge (x4);
\end{tikzpicture}
\end{center}
\caption{$3$-Dimensional posets whose cover graphs are trees}
\label{fig:3dim-trees}
\end{figure}

A second paper in which trees and cut vertices are discussed is \cite{Tro78}, but the results of this paper are considerably stronger.
Here is a more recent result \cite{FTW15}, and only recently has the connection with blocks and cut vertices become clear.

\begin{theorem}
\label{thm:outerplanar}
If\/ $P$ is a poset and the cover graph of\/ $P$ is outerplanar, then\/ $\dim(P)\leq 4$.
Furthermore, this inequality is best possible.
\end{theorem}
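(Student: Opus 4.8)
The plan is to establish the upper bound $\dim(P)\le 4$ and the tightness claim separately, and for the upper bound to combine the block--cut-tree structure of outerplanar graphs with a bound on the dimension of posets whose cover graph is $2$-connected and outerplanar. Note first that Theorem~\ref{thm:main} alone is not enough: a poset whose cover graph is $2$-connected and outerplanar may already have dimension $3$ --- for instance the standard example $S^0_3$ (three minimal elements $a_1,a_2,a_3$, three maximal elements $b_1,b_2,b_3$, with $a_i<b_j$ if and only if $i\ne j$), whose cover graph is the $6$-cycle --- so the additive overhead $2$ of Theorem~\ref{thm:main} would only give $\dim(P)\le 5$. The point to exploit is that for an outerplanar graph the block--cut tree is tree-like in a much stronger sense than in a general graph, so realizers of the blocks can be merged with an overhead of $1$ rather than $2$.

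For the upper bound I would first record two structural facts. (i)~A cover graph contains no triangle (an acyclic orientation of a triangle is transitive, and a transitive triple $x\lessdot y\lessdot z$ cannot have $x\lessdot z$), so every block of $P$ is either a single edge or a $2$-connected triangle-free outerplanar graph; the latter has the outer boundary as its unique Hamiltonian cycle, its chords form a non-crossing family, and its inner faces are arranged along a tree (the weak dual). (ii)~An outerplanar embedding of the cover graph of $P$ induces, at each cut vertex $v$, a linear order on the blocks incident with $v$, namely the order in which they appear in the rotation at $v$. The core of the argument is the $2$-connected case, where I would aim to show that if the cover graph of $B$ is $2$-connected and outerplanar then $\dim(B)\le 3$, and moreover that $B$ has a size-$3$ realizer that is ``coherent'': after cutting the Hamiltonian cycle at a single well-chosen vertex, two of the three linear extensions are the orders obtained by sweeping the resulting Hamilton path in its two directions, and the third pays for re-identifying the two copies of the cut vertex. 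The natural way to prove this is by induction on the number of chords, peeling off a leaf of the weak-dual tree --- a region bounded by one chord and a subpath of the Hamiltonian cycle --- and checking that it can be re-attached without enlarging the realizer.

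Then I would glue. Root the block--cut tree and process its blocks from the leaves inward, maintaining four linear extensions of the subposet of $P$ treated so far; when a new block $B$ is attached at a cut vertex $v$, extend the four current linear extensions by inserting the portion of $B$ above $v$ and the portion of $B$ below $v$ into positions dictated by the coherent size-$3$ realizer of $B$, using the linear order on the blocks at $v$ from fact~(ii) to decide the relative placement. Because that order is a genuine linear order --- this is exactly the non-crossing geometry of an outerplanar embedding --- one never has to break the blocks at $v$ into two unrelated groups, which is precisely the step that costs the extra $2$ in Theorem~\ref{thm:main}; here a single ``separating'' linear extension does the job, so $\dim(P)\le 3+1=4$. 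The main obstacle is to verify that the merged family of four linear extensions reverses every critical pair of $P$, i.e.\ that attaching a block never creates an alternating cycle together with relations committed at earlier stages; this is where triangle-freeness and the non-crossing structure are used essentially, and it is the technical heart of the upper bound. (An alternative route dispenses with blocks entirely: fix an outerplanar embedding of the cover graph of $P$, read the vertices off the outer boundary walk, and use their left-to-right order on that walk together with an up/down parity to write down four explicit linear extensions --- a direct generalization of the argument behind Theorem~\ref{thm:tree}.)

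For the tightness claim I would exhibit a poset $P^{\ast}$ with outerplanar cover graph and $\dim(P^{\ast})=4$; the upper bound just sketched gives $\dim(P^{\ast})\le 4$, so the real work is the lower bound. A natural template is to glue many copies of a dimension-$3$ gadget (a path-like variant of $S^0_3$) along cut vertices in a pattern entangled enough that, for every choice of three linear extensions, two critical pairs survive whose simultaneous reversal in a common fourth linear extension is impossible --- in the spirit of the general-$d$ constructions of Section~\ref{sec:best-possible}, but kept small enough that the cover graph stays outerplanar. Proving $\dim(P^{\ast})\ge 4$ then reduces to producing, for an arbitrary candidate realizer of size $3$, such a conflicting pair of critical pairs via an alternating-cycle argument. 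I expect this lower bound, rather than anything in the upper bound, to be the genuine difficulty of the theorem.
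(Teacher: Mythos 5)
Your submission is a research plan rather than a proof, and both halves rest on claims you do not establish. Note first that the paper itself does not prove Theorem~\ref{thm:outerplanar}; it quotes it from \cite{bib:FeTrWi} and only exhibits the tightness witness (the poset of Figure~\ref{fig:diamonds}). For the upper bound, your entire argument hinges on the lemma that a poset whose cover graph is $2$-connected and outerplanar has dimension at most $3$, and moreover admits a ``coherent'' realizer compatible with a single cut of the Hamiltonian cycle; you prove neither assertion, and you yourself flag the verification that the merged four linear extensions reverse all critical pairs (equivalently, contain no alternating cycle) as ``the technical heart'' left undone. This is precisely the step where such arguments die: the merge rule of Section~\ref{sec:upper-bound} shows that naively gluing block realizers at cut vertices costs an additive $2$, and you offer no actual mechanism---only the suggestion that the rotation order of blocks at each cut vertex should help---for getting away with $1$. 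As it stands the upper bound is not proved.

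The lower bound is in worse shape: you name no poset. ``Glue many copies of a dimension-$3$ gadget in a pattern entangled enough that\dots'' describes what a counterexample would have to accomplish, not a construction, and reducing the problem ``to producing, for an arbitrary candidate realizer of size $3$, a conflicting pair of critical pairs'' merely restates what $\dim\ge4$ means. The known witness is concrete and rather different from what you sketch: it is the poset $P_n$ of Figure~\ref{fig:diamonds}, consisting of $n\ge17$ diamonds (each of dimension $2$, not $3$) all sharing the single cut vertex $x$, with $a_i<x<c_i$ and $a_i<b_i<c_i$ while $x$ and $b_i$ are incomparable; the proof that it has dimension $4$ is a genuine counting and alternating-cycle argument carried out in \cite{bib:FeTrWi}. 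One incidental point in your favor: your observation that $S_3$ has a $2$-connected outerplanar cover graph (a $6$-cycle), so that Theorem~\ref{thm:main} alone yields only $\dim(P)\le5$, is correct and correctly identifies why the theorem does not follow from the rest of this paper.
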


In \cite{FTW15}, the construction shown in Figure~\ref{fig:diamonds} is given, and it is shown that when $n\geq 17$, the resulting poset has dimension $4$.
Note that the cover graph of this poset is outerplanar.
As a consequence, the inequality in Theorem~\ref{thm:outerplanar} is best possible.
Moreover, every block of $P_n$ is a $4$-element subposet having dimension $2$ (these subposets are called ``diamonds'').
We may then conclude that when $d=2$, the inequality in our main theorem is best possible.
However, the construction we present in Section~\ref{sec:best-possible} to show that our upper bound is best possible will again handle all values of $d$ with $d\geq 2$ at the same time, so it will not use this result either.

\begin{figure}
\begin{center}
\begin{tikzpicture}[scale=0.7]
  \tikzstyle{every node}=[circle,minimum size=4pt,inner sep=0pt,draw]
  \tikzstyle{every label}=[rectangle,label distance=3pt,draw=none]
  \node[label=left:$x$] (x) at (-2.5,0) {};
  \node[label=below:$a_1$] (a1) at (0,-1) {};
  \node[label=below:$a_2$] (a2) at (0,-2.25) {};
  \node[label=below:$a_3$] (a3) at (0,-3.5) {};
  \node[rectangle,draw=none] at (0,-4.5) {$\vdots$};
  \node[label=below:$a_n$] (a4) at (0,-5.5) {};
  \node[label=right:$b_1$] (b1) at (1,0) {};
  \node[label=right:$b_2$] (b2) at (2.25,0) {};
  \node[label=right:$b_3$] (b3) at (3.5,0) {};
  \node[rectangle,draw=none,yshift=-0.27pt] at (4.77,0) {$\cdots$};
  \node[label=right:$b_n$] (b4) at (5.5,0) {};
  \node[label=above:$c_1$] (c1) at (0,1) {};
  \node[label=above:$c_2$] (c2) at (0,2.25) {};
  \node[label=above:$c_3$] (c3) at (0,3.5) {};
  \node[rectangle,draw=none] at (0,4.8) {$\vdots$};
  \node[label=above:$c_n$] (c4) at (0,5.5) {};
  \path (x) edge (a1) edge (a2) edge (a3) edge (a4) edge (c1) edge (c2) edge (c3) edge (c4);
  \path (b1) edge (a1) edge (c1);
  \path (b2) edge (a2) edge (c2);
  \path (b3) edge (a3) edge (c3);
  \path (b4) edge (a4) edge (c4);
\end{tikzpicture}
\end{center}
\caption{A $4$-dimensional poset (when $n\geq 17$) with outerplanar cover graph}
\label{fig:diamonds}
\end{figure}
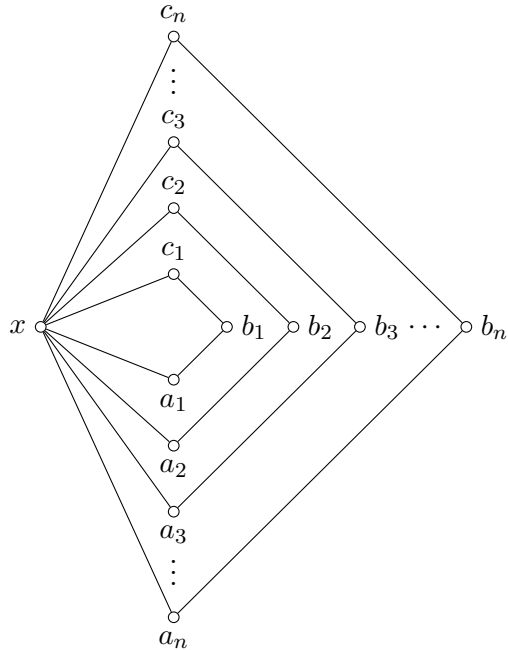

The results of this paper are part of a more comprehensive series of papers exploring connections between dimension of posets and graph-theoretic properties of their cover graphs.
Recent related papers include \cite{BKY16,JMM+16,JMT+17,JMW18,MW17,ST14,TW16,Wal17}.
However, many of these modern research themes have their roots in results, such as Theorem~\ref{thm:tree}, obtained in the 1970s or even earlier.

Here is one such example and again, Theorem~\ref{thm:tree} was the starting point.
The result is given in \cite{JMM+16}.

\begin{theorem}
\label{thm:tree-width}
For any positive integers\/ $t$ and\/ $h$, there is a least positive\/ $d=d(t,h)$ so that if\/ $P$ is a poset of height\/ $h$ and the cover graph of\/ $P$ has tree-width\/ $t$, then\/ $\dim(P)\leq d$.
\end{theorem}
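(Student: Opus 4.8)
The plan is to prove the statement by induction on the height $h$, with the inductive step isolated as a lemma bounding the growth of dimension when one antichain ``layer'' is removed from a poset whose cover graph has bounded tree-width. I use throughout the standard reformulations of dimension: a family of linear extensions of a poset $Q$ is a realizer exactly when every critical pair of $Q$ is reversed by some member, and a set of critical pairs is reversed by a single linear extension of $Q$ exactly when it contains no \emph{alternating cycle} $(x_1,y_1),\dots,(x_m,y_m)$ with $x_i<y_{i+1}$ in $Q$ for all $i$ modulo~$m$.

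Preliminary reductions: since $\dim(Q)$ equals the maximum dimension of a component of $Q$ unless $Q$ is a disjoint union of chains (when $\dim(Q)\le2$), and deleting vertices cannot raise tree-width or height, it suffices to bound $\dim(P)$ for a connected poset $P$ of height $h$ whose cover graph $G$ has tree-width at most~$t$. For $k\in\{1,\dots,h\}$ let $V_k$ be the set of elements of $P$ of height exactly $k$; each $V_k$ is an antichain, $V_1$ is the set of minimal elements, and $P_k:=V_1\cup\dots\cup V_k$ is an order ideal, hence a convex subposet whose cover graph is $G[V(P_k)]$ --- still of tree-width at most $t$ --- with height $k$ and with $V_k$ a set of maximal elements of $P_k$. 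I would show by induction on $k$ that $\dim(P_k)\le2+(k-1)c(t)$ for a suitable function $c$; the base case $k=1$ is immediate since $P_1$ is an antichain.

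For the step, extend a realizer of $P_{k-1}$ of minimum size to linear extensions of $P_k$ (possible since $P_{k-1}$ is an ideal); this reverses every critical pair of $P_k$ with both coordinates in $P_{k-1}$, each such pair being critical already in $P_{k-1}$. As $V_k$ consists of maximal elements of $P_k$, every remaining critical pair $(x,y)$ either has $x\in V_k$ (which forces $y$ to have maximum height in $P_k$) or has $x\in P_{k-1}$, $y\in V_k$ with the down-set of $x$ inside that of $y$; two more linear extensions placing $P_{k-1}$ entirely below $V_k$, ordered by a linear order on $V_k$ and by its reverse, dispatch all pairs of the first type. Everything thus reduces to the \emph{Key Lemma}: there is a function $c_0$ so that, for any poset $Q$ whose cover graph has tree-width at most $t$ and with $M$ the set of elements of $Q$ of maximum height, all critical pairs $(x,y)$ with $x\in Q\setminus M$ and $y\in M$ can be reversed using at most $c_0(t)$ linear extensions of $Q$. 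Then $c(t):=c_0(t)+2$ works, $\dim(P)=\dim(P_h)\le2+(h-1)c(t)$, and the least admissible $d(t,h)$ exists by well-ordering.

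The Key Lemma is the main obstacle and the only point using tree-width. By the alternating-cycle criterion it suffices to partition the cross critical pairs into $c_0(t)$ classes free of alternating cycles. The intuition is that a long alternating cycle of cross critical pairs forces a strongly ``crossing'' configuration inside $G$ --- as witnessed by the standard example $S_n$ (minimal elements $a_1,\dots,a_n$, maximal elements $b_1,\dots,b_n$, $a_i<b_j$ iff $i\ne j$), whose cover graph is $K_{n,n}$ minus a perfect matching and so has tree-width growing with $n$, and whose $n$ cross critical pairs $(a_i,b_i)$ pairwise form two-element alternating cycles, genuinely requiring $n$ linear extensions. To turn this around I would fix a width-$t$ tree decomposition of $G$, attach to each $y\in M$ the subtree of bags containing it, and colour the cross pairs $(x,y)$ by the relative position of the subtrees of $x$ and $y$ and the content of the $O(t)$-size bags separating them, arguing via the separator structure that once the palette exceeds a function of $t$ no monochromatic alternating cycle can occur. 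Making this precise --- ruling out obstructions subtler than alternating cycles, and checking compatibility of the new extensions with the lifted realizer --- is the technical heart; the remainder is bookkeeping.
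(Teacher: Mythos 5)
This paper does not actually prove Theorem~\ref{thm:tree-width}; it is quoted from \cite{bib:JMMTWW} purely as background, so there is no in-paper proof to compare against and your proposal has to stand on its own. The surrounding bookkeeping is essentially correct: $P_{k-1}$ is an order ideal, so its critical pairs are exactly the critical pairs of $P_k$ with both coordinates in $P_{k-1}$ and a realizer of $P_{k-1}$ lifts to linear extensions of $P_k$ reversing them; and since every $x\in V_k$ is maximal in $P_k$, a critical pair $(x,y)$ with $x\in V_k$ forces $y$ to be \emph{maximal} in $P_k$ (not, as you write, of maximum height --- a harmless slip, since your two extensions placing $P_{k-1}$ below $V_k$, with $V_k$ ordered both ways, still reverse all such pairs). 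The difficulty is that this reduction concentrates the entire content of the theorem into the Key Lemma, which you do not prove and explicitly defer as the ``technical heart''.

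Moreover, the Key Lemma as stated is not a routine missing step but a claim at least as strong as the theorem itself. Because $c_0$ is required to depend only on $t$ and not on the height of $Q$, your induction would give $\dim(P)\le 2+(h-1)(c_0(t)+2)$, a bound \emph{linear} in $h$ for fixed tree-width. The proof in \cite{bib:JMMTWW} yields nothing of the sort: its colouring of critical pairs by signatures read off a tree decomposition uses the height of the poset essentially in defining the palette, and the resulting bound grows far faster than linearly in $h$. Your one-sentence plan --- colour cross pairs by the relative positions of the subtrees of $x$ and $y$ and the contents of the $O(t)$-size separating bags, then argue that monochromatic classes contain no alternating cycle --- gives no indication of why a palette of size depending only on $t$ could suffice, nor of how the separator structure defeats alternating cycles of length greater than two (length-two cycles only account for the embedded standard examples you mention; the genuinely hard obstructions are the longer cycles). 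As it stands, the argument reduces the theorem to an unproven statement whose truth is unclear and which, if true, would be a stronger result than the one being proved; that is a genuine gap, not bookkeeping.
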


As discussed in greater detail in \cite{JMM+16}, the function $d(t,h)$ must go to infinity with $h$ when $t\geq 3$, and in view of Theorem~\ref{thm:tree}, it is bounded for all $h$ when $t=1$.
These observations left open the question as to whether $d(2,h)$ is bounded or goes to infinity with $h$.
It is now known that $d(2,h)$ is bounded \cite{BKY16,JMT+17}.
However, in attacking this problem, the fact that one can restrict their attention to posets with $2$-connected cover graphs was a useful detail.

Also, the role of cut vertices in cover graphs surfaced in \cite{Wal17}, where the following result, which is considerably stronger than Theorem~\ref{thm:tree-width}, is proved.

\begin{theorem}
For any positive integers\/ $t$ and\/ $h$, there is a least positive integer\/ $d=d(n,h)$ so that if\/ $P$ is a poset of height at most\/ $h$ and the cover graph of\/ $P$ does not contain the complete graph\/ $K_n$ as a minor, then\/ $\dim(P)\leq d$.
\end{theorem}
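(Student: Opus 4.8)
The plan is to derive this theorem from the Graph Minor Structure Theorem of Robertson and Seymour, with Theorem~\ref{thm:tree-width} serving as the innermost building block.

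Since every subgraph of a graph with no $K_n$-minor again excludes $K_n$ as a minor, every block of $P$ has a cover graph with no $K_n$-minor; so by Theorem~\ref{thm:main} it suffices to bound the dimension of posets whose cover graph is $2$-connected and excludes $K_n$ as a minor, at the cost of an additive constant. Granting that reduction, I would invoke the structure theorem in the following form: there is an integer $k=k(n)$ such that the cover graph $G$ of $P$ admits a tree-decomposition of adhesion at most $k$ in which every torso is $k$-almost-embeddable in a surface of Euler genus at most $k$; that is, after deleting at most $k$ apex vertices, the torso embeds in such a surface except for at most $k$ vortices, each of depth at most $k$, attached along face boundaries.

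The heart of the argument is a single quantitative statement: if $P$ has height at most $h$ and its cover graph is $k$-almost-embeddable in a surface of bounded Euler genus, then $\dim(P)$ is bounded in terms of $k$ and $h$. I would first dispose of the apices and vortices, the apices by a vertex-deletion argument whose cost is bounded in terms of $k$ and $h$, and the vortices by noting that their bounded depth yields bounded path-width, which falls within the scope of Theorem~\ref{thm:tree-width}. This leaves the genuinely surface-embedded case, for which I would use an ``unavoidable substructure'' argument: supposing $\dim(P)$ large, one extracts from a suitable family of critical pairs a large, highly organized configuration (such as a large standard example or a Kelly-type configuration) and then derives a contradiction from the height bound together with a topological argument on the surface. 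Finally, having bounded the dimension of each torso, I would amalgamate the corresponding realizers into one realizer for $P$ along the decomposition tree, using that every adhesion set has at most $k$ elements, and hence induces a subposet of dimension bounded in terms of $k$ and $h$, to force the local linear extensions of adjacent bags to agree where they must.

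I expect the main obstacle to be precisely this passage from \emph{structure to realizers}. Unlike the tree, outerplanar, and bounded tree-width cases, here one must track simultaneously the topology of a surface embedding, the small separators supplied by the tree-decomposition, the apices and vortices, and the order relation of $P$, with the additional complication that the edges created inside the torsos are not cover relations of $P$ and so can only be accommodated indirectly. One might instead try to bypass the structure theorem entirely, by showing that bounded height together with a bound on the weak coloring numbers of the cover graph already forces bounded dimension, and then invoking the known bound on the weak coloring numbers of $K_n$-minor-free graphs; but this relocates the difficulty more than it removes it.
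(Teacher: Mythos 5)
This theorem is not proved in the paper you were given: it is quoted from reference \cite{bib:Walc} (with an elementary alternative proof in \cite{bib:MicWie}), and the paper only remarks that the original proof ``uses the machinery of structural graph theory.'' So your overall plan --- reduce to the $2$-connected case via Theorem~\ref{thm:main}, invoke the Robertson--Seymour structure theorem, handle apices, vortices, and the surface part separately, and then amalgamate along the tree-decomposition --- is indeed the same genre of argument as the cited proof. The reduction via Theorem~\ref{thm:main} is sound (blocks of $P$ are convex subposets of height at most $h$ whose cover graphs are $K_n$-minor-free), though it buys little, since the structure theorem does not need $2$-connectivity.

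As a proof, however, the proposal leaves its two hardest steps as placeholders rather than arguments. First, the surface-embedded case: ``extract a large standard example or Kelly-type configuration and derive a contradiction from the height bound together with a topological argument'' is a statement of intent, not a proof; even the planar, height-$h$ case is a substantial theorem (\cite{bib:StrTro}), and nothing in your sketch indicates how the topology is actually used. Second, the amalgamation: there is no general lemma that converts realizers of the torsos of a tree-decomposition of adhesion $k$ into a realizer of $P$ at bounded cost. The adhesion sets having at most $k$ elements does give each of them dimension at most $k$ as a subposet, but that does not ``force the local linear extensions of adjacent bags to agree''; reversing an incomparable pair $(x,y)$ whose witnessing comparabilities pass through many bags requires a global argument (this is exactly where the height bound must enter, since for $h$ unbounded the conclusion is false already for tree-width $3$). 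Finally, your apex step hides a standard pitfall: deleting elements of $P$ changes the cover graph in uncontrolled ways (new cover relations appear), so the cover graph of $P$ minus the apices need not be a subgraph of $G$ minus the apices, and a naive ``cost of deletion'' bound such as Hiraguchi's removal theorem applies per vertex, not per apex set of size $k$ with the embeddability preserved. These are genuine gaps, not bookkeeping, so the proposal should be regarded as a correct identification of the known proof strategy rather than a proof.
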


The proof given in \cite{Wal17} uses the machinery of structural graph theory.
Subsequently, an alternative proof, using only elementary methods, was given in \cite{MW17}, and an extension to classes of graphs with bounded expansion was given in \cite{JMW18}.

\section{Dimension theory essentials}
\label{sec:dimension}

Let $P$ be a poset with ground set $X$.
Then, let $\Inc(P)$ denote the set of all ordered pairs $(x,y)\in X\times X$ where $x$ is incomparable to $y$ in $P$.
The binary relation $\Inc(P)$ is of course symmetric, and it is empty when $P$ is a total order---in this case, $\dim(P)=1$.

A subset $R\subseteq\Inc(P)$ is \emph{reversible} when there is a linear extension $L$ of $P$ so that $x>y$ in $L$ for all $(x,y)\in R$.
When $\Inc(P)\neq\emptyset$, the dimension of $P$ is then the least positive integer $d$ for which there is a covering
\[\Inc(P)=R_1\cup R_2\cup\cdots\cup R_d\]
such that $R_j$ is reversible for each $j=1,2,\ldots,d$.

In the proof of the upper bound in our main theorem, we will apply these observations to show that a poset $P$ has dimension at most $d+2$ by first constructing a family $\cgF=\{L_1,L_2,\ldots,L_d\}$ of linear extensions of $P$ and then setting $R=\{(x,y)\in P\colon x>y$ in $L_j$ for all $j=1,2,\ldots,d\}$.
If $R=\emptyset$, then $\dim(P)\leq d$, and when $R\neq\emptyset$, we will find a covering $R=R_{d+1}\cup R_{d+2}$, where both $R_{d+1}$ and $R_{d+2}$ are reversible.
If $L_{d+1}$ and $L_{d+2}$ are linear extensions of $P$, so that $x>y$ in $L_j$ whenever $(x,y)\in R_j$ for each $j=d+1,d+2$, then $\cgR=\{L_1,L_2,\ldots,L_d,L_{d+1},L_{d+2}\}$ is a realizer of $P$, which shows $\dim(P)\leq d+2$.

An indexed subset $\{(x_i,y_i)\colon 1\leq i\leq k\}\subseteq\Inc(P)$ is called an \emph{alternating cycle} of length $n$ when $x_i\leq y_{i+1}$ in $P$, for all $i=1,2,\ldots,k$ (here, subscripts are interpreted cyclically so that $x_n\leq y_1$ in $P$).
In \cite{TM77}, the following elementary result is proved.

\begin{lemma}
Let\/ $P$ be a poset and let\/ $R\subseteq\Inc(P)$.
Then\/ $R$ is reversible if and only if\/ $R$ does not contain an alternating cycle.
\end{lemma}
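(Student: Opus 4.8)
The plan is to prove the two implications separately; the reverse implication is where the real content lies.

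\emph{Reversible implies no alternating cycle.} I would argue by contradiction. Suppose $L$ is a linear extension of $P$ with $x>y$ in $L$ for every $(x,y)\in R$, and suppose $\{(x_i,y_i)\colon 1\le i\le k\}\subseteq R$ is an alternating cycle. For each $i$, from $(x_i,y_i)\in R$ we get $y_i<x_i$ in $L$, and from $x_i\le y_{i+1}$ in $P$ together with the fact that $L$ extends $P$ we get $x_i\le y_{i+1}$ in $L$; hence $y_i<y_{i+1}$ in $L$ for every $i$, with subscripts interpreted cyclically. Chaining these strict inequalities around the cycle yields $y_1<y_1$ in $L$, which is absurd. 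The only facts used here are that $L$ extends $P$ and that each $x_i$ is genuinely incomparable to $y_i$, so in particular $x_i\ne y_i$.

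\emph{No alternating cycle implies reversible.} Assume $R$ contains no alternating cycle. Form the digraph $D$ on the ground set of $P$ with an arc $a\to b$ whenever $a<b$ in $P$ and an additional arc $y\to x$ for each pair $(x,y)\in R$. A linear extension of $P$ that reverses every pair of $R$ is precisely a linear order in which every arc of $D$ points forward, and such a linear order exists if and only if $D$ is acyclic; so it suffices to prove that $D$ has no directed cycle. Suppose it does, and choose a directed cycle $C$ in $D$ with as few arcs as possible, so that $C$ is simple. Since the comparability arcs alone encode the strict order of $P$ and hence form an acyclic digraph, $C$ must use at least one of the extra arcs. Reading the extra arcs of $C$ in cyclic order, say they are $y_1\to x_1,\dots,y_k\to x_k$, coming from pairs $(x_1,y_1),\dots,(x_k,y_k)\in R$. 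Between the head $x_j$ of one extra arc and the tail $y_{j+1}$ of the next (indices cyclic), the cycle $C$ traverses only comparability arcs, that is, a directed path in $P$, possibly of length zero; in every case $x_j\le y_{j+1}$ in $P$. Thus $\{(x_j,y_j)\colon 1\le j\le k\}$ is an alternating cycle contained in $R$, a contradiction. Hence $D$ is acyclic, and any topological ordering $L$ of $D$ is a linear extension of $P$ reversing all of $R$.

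The only real obstacle is in the reverse direction: choosing the right auxiliary structure (the digraph $D$, equivalently the transitive closure of the order of $P$ together with the reversed pairs of $R$) so that a minimal directed cycle in it, with its $P$-subpaths contracted, becomes verbatim an alternating cycle in $R$. A minor point to handle is the degenerate case $x_j=y_{j+1}$, when the intervening directed path has length zero; this still gives $x_j\le y_{j+1}$ in $P$, so nothing changes, and incidentally $k=1$ cannot occur, since $x_1\le y_1$ in $P$ would contradict $(x_1,y_1)\in\Inc(P)$.
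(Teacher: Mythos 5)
Your proof is correct. The paper does not actually prove this lemma---it simply cites it to Trotter and Moore's 1977 paper---but your argument is the standard one for this result: a linear extension reversing $R$ is exactly a topological ordering of the digraph obtained by adding the arcs $y\to x$ for $(x,y)\in R$ to the strict order of $P$, and a minimal directed cycle in that digraph contracts to an alternating cycle. Both directions are handled cleanly, including the degenerate cases $x_j=y_{j+1}$ and $k=1$, so there is nothing to correct.
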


The following construction was given in \cite{DM41}, where the concept of dimension was introduced.
For an integer $d\geq 2$, let $S_d$ be the following height $2$ poset: $S_d$ has $d$ minimal elements $\{a_1,a_2,\ldots,a_d\}$ and $d$ maximal elements $\{b_1,b_2,\ldots,b_d\}$; the partial ordering on $S_d$ is defined by setting $a_i<b_j$ in $S_d$ if and only if $i\neq j$.
The poset $S_d$ is called the \emph{standard example} (of dimension $d$).

In dimension theory, standard examples play a role which in many ways parallels the role of complete graphs in the study of chromatic number, and we refer the reader to \cite{BHPT16} for additional details on extremal problems for which results for graphs and results for posets have a similar flavor.
In this paper, we will only need the following basic information about standard examples, which was noted in \cite{DM41}.

\begin{proposition}
For every\/ $d\geq 2$, we have\/ $\dim(S_d)=d$.
In fact, if\/ $S_d$ is a standard example, and\/ $a_i>b_i$ in a linear extension\/ $L$ of\/ $S_d$ then\/ $a_j<b_j$ in\/ $L$ whenever\/ $i\neq j$.
\end{proposition}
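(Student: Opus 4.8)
The plan is to treat the ``in fact'' clause as the core of the proposition and obtain the equality $\dim(S_d)=d$ as a consequence. First I would prove the clause directly. Suppose $L$ is a linear extension of $S_d$ with $a_i>b_i$ in $L$, and suppose for contradiction that $a_j>b_j$ in $L$ for some $j\neq i$. Because $i\neq j$, we have $a_i<b_j$ and $a_j<b_i$ in $S_d$, hence also in $L$; combining these with $b_i<a_i$ and $b_j<a_j$ in $L$ produces the cyclic chain $b_i<a_i<b_j<a_j<b_i$ in $L$, which is absurd. So $a_j<b_j$ in $L$ whenever $j\neq i$.

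For the lower bound $\dim(S_d)\ge d$, let $\cgF=\{L_1,\dots,L_m\}$ be any realizer of $S_d$. For each $i$ the elements $a_i$ and $b_i$ are incomparable in $S_d$, so some member of $\cgF$, say $L_{\phi(i)}$, has $a_i>b_i$. The clause just proved shows that $\phi$ is injective: if $\phi(i)=\phi(j)=k$ with $i\neq j$, then $L_k$ has both $a_i>b_i$ and $a_j>b_j$, which the clause forbids. Hence $m\ge d$, and so $\dim(S_d)\ge d$. (Alternatively one can route this through Lemma~\ref{lem:alt-cycle}: each two-element set $\{(a_i,b_i),(a_j,b_j)\}$ with $i\neq j$ is an alternating cycle, so in any reversible covering of $\Inc(S_d)$ no reversible part contains two of the pairs $(a_i,b_i)$, forcing at least $d$ parts.)

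For the upper bound I would display an explicit realizer of size $d$. For each $i\in\{1,\dots,d\}$ let $L_i$ be the linear extension in which the elements appear, from bottom to top, as: all $a_k$ with $k\neq i$ (in increasing order of index), then $b_i$, then $a_i$, then all $b_\ell$ with $\ell\neq i$ (in increasing order of index). Checking the relations $a_k<b_\ell$ (for $k\neq\ell$) of $S_d$ case by case confirms that each $L_i$ is indeed a linear extension, and $a_i>b_i$ in $L_i$ by construction. To see that $\{L_1,\dots,L_d\}$ is a realizer, one checks that every incomparable pair of $S_d$ is reversed somewhere: the pair $a_i,b_i$ is reversed in $L_i$; for $i\neq j$ the pair $a_i,a_j$ has $a_j<a_i$ in $L_i$ and $a_i<a_j$ in $L_j$, and likewise $b_i,b_j$ is split by $L_i$ and $L_j$. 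Combined with the lower bound, this gives $\dim(S_d)=d$.

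I do not expect a real obstacle in this proposition. The only spot needing a bit of attention is the upper-bound construction: the $d$ linear extensions must be arranged so that the ``off-diagonal'' incomparabilities $a_i\parallel a_j$ and $b_i\parallel b_j$ are reversed as well, not merely the critical pairs $a_i\parallel b_i$; placing $b_i$ just below $a_i$ in $L_i$ is exactly what takes care of this.
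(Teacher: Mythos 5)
Your proof is correct. Note that the paper itself offers no proof of this proposition---it is simply quoted from Dushnik and Miller \cite{bib:DusMil}---and your argument (the four-term cycle $b_i<a_i<b_j<a_j<b_i$ for the ``in fact'' clause, the resulting injectivity forcing $\dim(S_d)\ge d$, and the explicit realizer $L_i$ placing $b_i$ immediately below $a_i$) is exactly the standard one; all three steps, including the verification that the off-diagonal incomparabilities $a_i\parallel a_j$ and $b_i\parallel b_j$ are handled, check out.
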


\section{Proof of the upper bound}
\label{sec:upper-bound}

Before launching into the main body of the proof, we pause to present an important proposition, which will be very useful in the argument to follow.
When $M$ is a linear extension of a poset $P$ and $w\in P$, we will write $M=[A<w<B]$ when the elements of $P$ can be labeled so that $M=[u_1<u_2<\cdots<u_m]$, $A=[u_1<u_2<\cdots<u_{k-1}]$, $w=u_k$, and $B=[u_{k+1}<u_{k+2}<\cdots<u_m]$.
The generalization of this notation to an expression such as $M=[A<C<w<D<B]$ should be clear.
Given this notation, the following is nearly self-evident, but it is stated for emphasis.

\begin{proposition}
\label{pro:merge}
Let\/ $P$ be a poset, and let\/ $w$ be a cut vertex in\/ $P$.
Let\/ $P'$ and\/ $P''$ be subposets of\/ $P$ such that\/ $P'\cap P''=\{w\}$ and the vertex\/ $w$ separates\/ $P'$ and\/ $P''$ in the cover graph of\/ $P$.
If\/ $M'=[A<w<B]$ and\/ $M''=[C<w<D]$ are linear extensions of\/ $P'$ and\/ $P''$, respectively, then\/ $M=[A<C<w<D<B]$ is a linear extension of the subposet of $P$ induced on\/ $P'\cup P''$.
Furthermore, the restriction of\/ $M$ to\/ $P'$ is\/ $M'$ and the restriction of\/ $M$ to\/ $P''$ is\/ $M''$.
\end{proposition}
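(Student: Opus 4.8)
The plan is to verify directly that the purported linear order $M = [A < C < w < D < B]$ is (i) a total order on the ground set of $P$, (ii) an extension of the partial order of $P$, and (iii) restricts to $M'$ on $P'$ and to $M''$ on $P''$. The first point is immediate from the hypothesis that $P'$ and $P''$ share only the vertex $w$: every element of $P$ other than $w$ lies in exactly one of $A, B$ (if it is in $P'$) or $C, D$ (if it is in $P''$), so the concatenation lists each element of $P$ exactly once and is by construction a linear order. The last point is also essentially by inspection: deleting from $M$ all elements not in $P'$ leaves the elements of $A$ (in their $M'$-order), then $w$, then the elements of $B$ (in their $M'$-order), which is precisely $M' = [A < w < B]$; the symmetric statement for $P''$ follows the same way. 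So the only real content is point (ii): that $M$ extends $<_P$.

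For (ii), I would take an arbitrary pair $x, y$ with $x <_P y$ and show $x <_M y$, splitting into cases according to which of the five blocks $A, \{w\}, C, D, B$ the elements $x$ and $y$ fall into. If $x$ and $y$ lie in the same one of $P', P''$, then $x <_{M'} y$ (resp. $x <_{M''} y$) because $M'$ (resp. $M''$) is a linear extension — note here one should first argue that a comparability $x <_P y$ with both endpoints in $P'$ is in fact a comparability of the subposet $P'$, which holds since $P'$ carries the induced order as a subposet. The block order of $M$ restricted to the blocks coming from a single side is $A < w < B$ and $C < w < D$ respectively, matching $M'$ and $M''$, so this case is fine. The genuinely interesting case is when $x \in P' \setminus \{w\}$ and $y \in P'' \setminus \{w\}$ (or vice versa): here I claim the comparability $x <_P y$ must factor through $w$, i.e. $x \le_P w \le_P y$. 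This is where the cut-vertex hypothesis does its work: any chain, and in particular any saturated chain, from $x$ to $y$ in $P$ corresponds to a path in the cover graph, and since $w$ separates the part of $P$ meeting $P' \setminus \{w\}$ from the part meeting $P'' \setminus \{w\}$ in the cover graph, such a path must pass through $w$; the vertex of the path at $w$ gives an element comparable to both $x$ and $y$ and lying between them, whence $x \le_P w \le_P y$ (using that $w$ is the unique common element, the chain element at $w$ is $w$ itself). Given $x \le_P w$ and both in $P'$, we get $x <_{M'} w$, so $x$ lies in the $A$-block; given $w \le_P y$ and both in $P''$, we get $w <_{M''} y$, so $y$ lies in the $D$-block; and in $M$ the $A$-block precedes the $D$-block, so $x <_M y$. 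The symmetric subcase ($x$ on the $P''$ side, $y$ on the $P'$ side) is handled identically, landing $x$ in $C$ and $y$ in $B$, and again $C$ precedes $B$ in $M$.

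The main obstacle is precisely the claim that a comparability between an element of $P' \setminus \{w\}$ and an element of $P'' \setminus \{w\}$ must pass through $w$; everything else is bookkeeping over the case split. The cleanest way to nail this down is to pick a saturated chain realizing $x <_P y$, read it as a path in the cover graph of $P$, and invoke the definition of cut vertex: removing $w$ disconnects the cover graph, and $x$ and $y$ lie in different components of that deletion (since $P'$ and $P''$ meet only in $w$, the sets $P' \setminus \{w\}$ and $P'' \setminus \{w\}$ live on opposite sides of the cut), so the path must use $w$, and the portion of the chain up to $w$ and from $w$ onward gives $x \le_P w \le_P y$. One should also remark, for cleanliness, that $M'$ and $M''$ do extend the induced orders on $P'$ and $P''$ — this is part of the standing assumption that they are linear extensions of those subposets — so that the within-side comparabilities are indeed respected. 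With that lemma in hand the proposition follows, and it is, as the authors say, nearly self-evident once the cut-vertex separation property is made explicit.
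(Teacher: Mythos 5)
Your proof is correct, and it supplies the verification that the paper itself omits entirely (the authors declare the proposition ``nearly self-evident'' and give no argument). Your case analysis is the natural one, and you correctly isolate the only nontrivial point: that a comparability $x<_P y$ with $x\in P'\setminus\{w\}$ and $y\in P''\setminus\{w\}$ must factor as $x\le_P w\le_P y$, which you obtain by reading a saturated chain from $x$ to $y$ as a path in the cover graph and invoking the separation at $w$. One small remark: the separation step uses slightly more than the literal hypotheses ``$w$ is a cut vertex'' and ``$P'\cap P''=\{w\}$''---you also need that $P=P'\cup P''$ and that every cover edge of $P$ lies within $P'$ or within $P''$, so that $P'\setminus\{w\}$ and $P''\setminus\{w\}$ really do sit in different components of the cover graph minus $w$. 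This is clearly the intended reading (and is exactly the situation in which the merge rule is applied later, where $P'$ is a union of blocks and $P''$ is a single new block rooted at $w$), so making it explicit, as you implicitly do, completes the argument rather than revealing a gap.
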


The rule $M=[A<C<w<D<B]$ will be called the \emph{merge rule}.
When it is applied, we will consider $M'=[A<w<B]$ as a linear extension of an ``old'' subposet $P'$ which shares a cut vertex $w$ with a ``new'' subposet $P''$ for which $M''=[C<w<D]$ is a linear extension.
We apply the merge rule to form a linear extension $M=[A<C<w<D<B]$ of the union $P'\cup P''$ and note that $M$ forces old points in $A\cup B$ to the outside while concentrating new points from $C\cup D$ close to $w$.

Now on to the proof.
We fix a positive integer $d\geq 1$ and let $P$ be a poset for which $\dim(B)\leq d$ for every block $B$ of $P$.
The remainder of the proof is directed towards proving that $\dim(P)\leq d+2$.
Let $G$ be the cover graph of $P$.
Since $d+2\geq 3$, we may assume that $G$ is connected.

Let $\cgB$ be the family of blocks in $P$, and let $t=|\cgB|$.
Then, let $\cgB=\{B_1,B_2,\ldots,B_t\}$ be any labeling of the blocks of $P$ such that for every $i=2,3,\ldots,t$, one of the vertices of $B_i$ belongs to some of the blocks $B_1,B_2,\ldots,B_{i-1}$.
Such a vertex of $B_i$ is unique and is a cut vertex of $P$---we call it the \emph{root} of $B_i$ and denote it by $\rho(B_i)$.

For every block $B_i\in\cgB$ and every element $u\in B_i$, we define the \emph{tail} of $u$ relative to $B_i$, denoted by $T(u,B_i)$, to be the subposet of $P$ consisting of all elements $v\in\{u\}\cup B_{i+1}\cup B_{i+2}\cup\cdots\cup B_t$ for which every path from $v$ to any vertex in $B_i$ passes through $u$.
Note that $T(u,B_i)=\{u\}$ if $u$ is not a cut vertex.
Also, if $u\in B_i$, $v\in B_{i'}$, and $(u,i)\neq(v,i')$, then either $T(u,B_i)\cap T(v,B_{i'})=\emptyset$ or one of $T(u,B_i)$ and $T(v,B_{i'})$ is a proper subset of the other.

For every block $B_i\in\cgB$, using the fact that $\dim(B_i)\leq d$, we may choose a realizer $\cgR_i=\{L_j(B_i)\colon 1\leq j\leq d\}$ of size $d$ for $B_i$.
For each $i=1,2,\ldots,t$, set $P_i=B_1\cup B_2\cup\cdots\cup B_i$.
Note that when $2\leq i\leq t$, we have $\rho(B_i)\in P_{i-1}$.

Fix an integer $j$ with $1\leq j\leq d$ and set $M_j(1)=L_j(B_1)$.
Then, repeat the following for $i=2,3,\ldots,t$.
Suppose that we have a linear extension $M_j(i-1)$ of $P_{i-1}$.
Let $w=\rho(B_i)$.
Since $w\in P_{i-1}$, we can write $M_j(i-1)=[A<w<B]$.
If $L_j(B_i)=[C<w<D]$, we then use the merge rule to set $M_j(i)=[A<C<w<D<B]$.
When the procedure halts, take $L_j=M_j(t)$.
This construction is performed for all $j=1,2,\ldots,d$ to determine a family $\cgF=\{L_1,L_2,\ldots,L_d\}$
of linear extensions of $P$.
The family $\cgF$ is a realizer for a poset $P^*$ which is an extension of $P$.
As outlined in the preceding section, we set $R=\{(x,y)\in\Inc(P)\colon x<y$ in $L_j$ for every $j=1,2,\ldots,d\}$.
We will show that there is a covering $R=R_{d+1}\cup R_{d+2}$ of $R$ by two reversible sets.
This is enough to prove $\dim(P)\leq d+2$.

Repeated application of Proposition~\ref{pro:merge} immediately yields the following.

\begin{block-prop}
For each $j=1,2,\ldots,d$ and each block $B_i\in\cgB$, the restriction of $L_j$ to $B_i$ is $L_j(B_i)$.
\end{block-prop}

When $L$ is a linear order on a set $X$ and $S\subseteq X$, we say $S$ is an \emph{interval} in $L$ if $y\in S$ whenever $x,z\in S$ and $x<y<z$ in $L$.
The next property follows easily from the observation that the merge rule concentrates new points close around the cut vertex $w$ while pushing old points to the outside.

\begin{interval-prop}
For every $j=1,2,\ldots,d$, and every pair $(u,i)$ with $u\in B_i$, the tail $T(u,B_i)$ of $u$ relative to $B_i$ is an interval in $L_j$.
\end{interval-prop}

Let $(x,y)\in R$.
Then, let $i$ be the least positive integer for which every path from $x$ to $y$ in the cover graph of $P$ contains at least
two elements of the block $B_i$.
We then define elements $u,v\in B_i$ by the following rules:
\begin{enumerate}
\item $u$ is the unique first common element of $B_i$ with every path from $x$ to $y$;
\item $v$ is the unique last common element of $B_i$ with every path from $x$ to $y$.
\end{enumerate}
Note that $u\neq v$, $u=x$ when $x\in B_i$, and $v=y$ when $y\in B_i$.

\begin{claim}
\label{cla:1}
The following two statements hold:
\begin{enumerate}
\item $x\in T(u,B_i)$, $y\notin T(u,B_i)$, $y\in T(v,B_i)$, and\/ $x\notin T(v,B_i)$;
\item\label{item:cla:1.2} $u<v$ in\/ $P$.
\end{enumerate}
\end{claim}

\begin{proof}
The first statement is an immediate consequence of the definition of tails.
For the proof of the second statement, suppose to the contrary that $u\not<v$ in $P$.
Since $u,v\in B_i$ and $u\neq v$, there is some $j$ with $1\leq j\leq d$ so that $u>v$ in $L_j(B_i)$.
Therefore, $u>v$ in $L_j$.
Since, $T(u,B_i)$ and $T(v,B_i)$ are disjoint intervals in $L_j$, we conclude that $x>y$ in $L_j$.
This contradiction shows $u<v$ in $P$, as claimed.
\end{proof}

\begin{claim}
\label{cla:2}
At least one of the following two statements holds:
\begin{enumerate}
\item for all\/ $y'$ with\/ $y'\geq x$ in\/ $P$, we have\/ $y'\in T(u,B_i)$ and\/ $y'<y$ in\/ $P^*$;
\item for all\/ $x'$ with\/ $x'\leq y$ in\/ $P$, we have\/ $x'\in T(v,B_i)$ and\/ $x<x'$ in\/ $P^*$.
\end{enumerate}
\end{claim}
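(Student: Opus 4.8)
The plan is to prove Claim~\ref{cla:2} by contradiction, assuming both statements fail and extracting from the failures two witnesses that combine into an alternating cycle in $R$, contradicting that $R$ consists of pairs reversed by $\cgF$ (i.e., $x<y$ in every $L_j$ for all of them). Recall from Claim~\ref{cla:1} that $x\in T(u,B_i)$, $y\in T(v,B_i)$, these two tails are disjoint intervals in each $L_j$, and $u<v$ in $P$.

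First I would unpack what it means for statement~(1) to fail: there is some $y'\ge x$ in $P$ with either $y'\notin T(u,B_i)$ or $y'\ge y$ in $P^*$. I would argue that $y'\ge x\ge u$ forces $y'$ into the tail $T(u,B_i)$ unless $y'$ already ``escapes'' past $B_i$ in a way that reaches $v$; more precisely, since every path from a point of $T(u,B_i)$ to $B_i$ goes through $u$, and $x\in T(u,B_i)$, a point $y'\ge x$ lies in $T(u,B_i)$ unless the chain from $x$ up to $y'$ leaves the tail, which can only happen by passing through $u$ and into $B_i$; tracking where it goes, one sees $y'\ge u$ and hence (using the block structure and $u<v$) that $y'$ is comparable to $v$ in $P^*$, in fact $y'\ge v$, which combined with $y\in T(v,B_i)$ and the interval property gives $y'\ge y$ in $P^*$. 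So in either branch the failure of~(1) yields a point $y'$ with $x\le y'$ in $P$ and $y'\ge y$ in $P^*$; symmetrically, failure of~(2) yields $x'$ with $x'\le y$ in $P$ and $x'\le x$ in $P^*$. (The symmetric roles of $u\leftrightarrow v$, $x\leftrightarrow y$, and the order-reversal are the same as in the proof of Claim~\ref{cla:1}.)

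Now I would assemble the contradiction. From $x\le y'$ in $P$ and $x'\le y$ in $P$ together with $(x,y)\in R\subseteq\Inc(P)$, and the derived relations $y'\ge y$, $x'\le x$ in $P^*$, I get that $\{(x,y),(x',y')\}$ — or more carefully an indexed family built from these two incomparable pairs — forms an alternating cycle: $x\le y'$ in $P$ closes one link, and $x'\le y$ in $P$ closes the other, once I check $(x',y')\in\Inc(P)$ (it is, since $x'\le x$ and $y'\ge y$ in $P^*$ would make $x'<y'$ in $P$ contradict nothing, so one must instead verify incomparability carefully — more likely the right pairs to use are $(x,y)$ and $(x',y')$ read as a length‑$2$ alternating cycle only after confirming both are in $R$). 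Then Lemma~\ref{lem:alt-cycle} says $R$ is not reversible — but by construction every pair of $R$ is reversed the same way by all of $L_1,\dots,L_d$, and more to the point the derived comparabilities in $P^*$ contradict the specific orientations $x<y$ in $P^*$. The cleanest route is: $y'\ge y$ in $P^*$ and $x\le y'$ in $P$ give $x<y$... no contradiction yet; the contradiction comes from pairing the two failures, so I would set up the alternating cycle using all of $x\le y'$, $x'\le x$ (hence $x'\le y'$-ish chain), $y'\ge y$, $x'\le y$ in $P$ to route a cycle through $(x,y)$ forcing $x>y$ somewhere.

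The main obstacle I anticipate is exactly this last bookkeeping: correctly identifying which incomparable pairs to feed into Lemma~\ref{lem:alt-cycle} and verifying that the escape analysis in the first step genuinely forces $y'\ge v$ (resp.\ $x'\le u$) in $P$ rather than merely in $P^*$, so that the alternating-cycle relations hold in $P$ as the definition requires. The topological claim ``a point above $x$ either stays in $T(u,B_i)$ or becomes comparable to $v$'' relies delicately on $B_i$ being a block (2‑connected) and on $u,v$ being the first/last meeting points of all $x$–$y$ paths with $B_i$; getting that argument airtight, using the Interval Property for Tails and Proposition~\ref{pro:Sd}-style reasoning inside $B_i$, is where the real work lies, and I expect it to consume most of the proof.
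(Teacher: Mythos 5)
There is a genuine gap here, on two counts. First, your ``escape analysis'' draws the wrong conclusion from the failure of statement~(1). What the failure actually gives (after using the Interval Property to rule out the other branch: if some $y'\ge x$ lay in $T(u,B_i)$ with $y'\not<y$ in $P^*$, then $x<y<y'$ in some $L_j$ with $x,y'$ in the interval $T(u,B_i)$ and $y$ outside it, which is impossible) is a point $y'>x$ in $P$ with $y'\notin T(u,B_i)$. A covering chain witnessing $x<y'$ must leave the tail, hence must pass through $u$, and since it is a \emph{chain} starting at $x$ this yields $x\le u$ in $P$ --- the useful inequality is about $x$, not about $y'$. Your version instead extracts $y'\ge u$ and then asserts ``using the block structure and $u<v$'' that $y'\ge v$; this is a non sequitur ($y'\ge u$ and $u<v$ say nothing about $y'$ versus $v$), and the subsequent claim $y'\ge y$ in $P^*$ does not follow either.

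Second, the contradiction is never actually assembled. You acknowledge this yourself (``no contradiction yet,'' ``the main obstacle I anticipate is exactly this last bookkeeping''), and the alternating-cycle machinery you reach for is not needed and, as set up, does not close: the pairs $(x',y')$ you would feed into Lemma~\ref{lem:alt-cycle} are not shown to lie in $R$ or even in $\Inc(P)$. The correct finish is elementary: the failure of~(1) gives $x\le u$ in $P$ as above, the failure of~(2) symmetrically gives $v\le y$ in $P$, and combining with Claim~\ref{cla:1}(2) yields $x\le u<v\le y$ in $P$, contradicting $(x,y)\in\Inc(P)$. No Ramsey-type or cycle argument, and no $2$-connectivity of $B_i$, is required at this point --- only the definition of tails and the Interval Property.
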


\begin{proof}
Suppose to the contrary that neither of the two statements holds.
Since $T(u,B_i)$ is an interval in $L_j$, $x\in T(u,B_i)$, $y\notin T(u,B_i)$, and $x<y$ in $L_j$ for each $j=1,2,\ldots,d$, there must exist some $y'$ with $y'>x$ in $P$ and $y'\notin T(u,B_i)$.
Then a path in $G$ from $x$ to $y'$ witnessing the inequality $x<y'$ in $P$ must include the point $u$.
In particular, this implies that $x\leq u$ in $P$.
Similarly, we have $v\leq y$ in $P$.
This and Claim \ref{cla:1}~\ref{item:cla:1.2} yield $x\leq u<v\leq y$ in $P$, which is a contradiction.
\end{proof}

Now, it is clear how to define the covering $R=R_{d+1}\cup R_{d+2}$.
We assign $(x,y)$ to $R_{d+1}$ when the first statement in Claim~\ref{cla:2} applies, and we assign it to $R_{d+2}$ when the second statement applies.
We show that $R_{d+1}$ is reversible.
The argument for $R_{d+2}$ is analogous.
Suppose to the contrary that $R_{d+1}$ is not reversible.
Then there is an integer $n\geq 2$ and an alternating cycle $\{(x_i,y_i)\colon 1\leq i\leq n\}$ contained in $R_{d+1}$.
Then $x_i\leq y_{i+1}$ for each $i=1,2,\ldots,n$.
However, since $(x_i,y_i)\in R_{d+1}$, we know that $y_{i+1}<y_i$ in $P^*$ for every $i=1,2,\ldots,n$.
This is impossible, because $P^*$ is a partial order.
The proof of the upper bound in Theorem~\ref{thm:main} is now complete.

\section{Proof that the upper bound is best possible}
\label{sec:best-possible}

As we noted previously, the examples shown in Figure~\ref{fig:3dim-trees} show that our upper bound is best possible when $d=1$.
So in this section, we will fix an integer $d\geq 2$ and show that there is a poset $P$ so that $\dim(P)=d+2$ while $\dim(B)\leq d$ for every block $B$ of $P$.

For a positive integer $n$, we let $\bfn$ denote the $n$-element chain $\{0<1<\cdots<n-1\}$.
Also, we let $\bfn^d$ denote the Cartesian product of $d$ copies of $\bfn$, that is, the elements of $\bfn^d$ are $d$-tuples of the form $u=(u_1,u_2,\ldots,u_d)$ where each coordinate $u_i$ is an integer with $0\leq u_i<n$.
The partial order on $\bfn^d$ is defined by setting $u=(u_1,u_2,\ldots,u_d)\leq(v_1,v_2,\ldots,v_d)=v$ in $\bfn^d$ if and only if $u_i\leq v_i$ in $\bfn$ for all $i=1,2,\ldots,d$.
As is well known, $\dim(\bfn^d)=d$ for all $n\geq 2$.

For each $n\geq 2$, we then construct a poset $P=P(n)$ as follows.
We start with a base poset $W$ which is a copy of $\bfn^d$.
The base poset $W$ will be a block in $P$, and $W$ will also be the set of cut vertices in $P$.
All other blocks in $P$ will be ``diamonds'', that is, copies of the $2$-dimensional poset on four points discussed in conjunction with Figure~\ref{fig:diamonds}.
Namely, for each element $w\in W$, we attach a $3$-element chain $x_w<y_w<z_w$ so that $x_w<w<z_w$ while $w$ is incomparable to $y_w$.
In this way, the $4$-element subposet $\{w,x_w,y_w,z_w\}$ is a diamond.

We will now prove the following claim.

\begin{claim}
If\/ $n$ is sufficiently large, then\/ $\dim(P)\geq d+2$.
\end{claim}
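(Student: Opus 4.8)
The goal is to show that if $n$ is large enough, then any realizer of $P=P(n)$ of size $d+1$ leads to a contradiction. Suppose $\cgF=\{L_1,\dots,L_{d+1}\}$ is such a realizer. The basic tension we want to exploit is that the base poset $W\cong\bfn^d$ has dimension exactly $d$, so on $W$ alone $d$ linear extensions can ``almost'' suffice, but the diamonds attached at every point $w\in W$ impose additional incomparabilities $(y_w,w)$ and $(w,y_w)$ that must also be reversed. The key structural fact about diamonds is that for the diamond $\{w,x_w,y_w,z_w\}$, a linear extension $L$ has $w>y_w$ in $L$ only if it ``commits'' $L$ in a way that is visible through the base point $w$: indeed since $x_w<w<z_w$ and $x_w<y_w<z_w$, whenever $y_w>w$ in $L$ we must have the block $\{w,x_w,y_w,z_w\}$ realized so that $x_w$ precedes both and $z_w$ follows both, and the relative order of $w,y_w$ is the only freedom. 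So for each $w$, exactly one of $w>y_w$ or $y_w>w$ holds in each $L_k$, and because $(w,y_w)\in\Inc(P)$, in \emph{at least one} $L_k$ we have $y_w>w$, and in \emph{at least one} other $L_k$ we have $w>y_w$.

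\textbf{Setting up the coloring for Ramsey.} First I would fix, for each $w\in W$, the set $S(w)=\{k : y_w>w \text{ in } L_k\}\subseteq\{1,2,\dots,d+1\}$; by the remark above, $\emptyset\neq S(w)\neq\{1,\dots,d+1\}$. Next, for each ordered pair $w<w'$ in $W$ (or more conveniently for each pair of points that are comparable in $W$ along a single coordinate step, or perhaps all comparable pairs), I would record the pattern of how the $L_k$'s order a bounded amount of local data around $w$ and $w'$: which coordinates ``increase'' between $w$ and $w'$, together with the sets $S(w),S(w')$ and the relative $L_k$-positions of the finitely many diamond points. Since all of this is bounded-size information independent of $n$, it defines a coloring of tuples drawn from the $d$ coordinate-chains $\bfn$ (one factor per coordinate direction) with a bounded number of colors. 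Applying the \emph{Product Ramsey Theorem}, for $n$ sufficiently large we obtain within each coordinate chain a long monochromatic sub-chain, i.e., we find a sub-grid $\bfm^d\hookrightarrow\bfn^d$ (with $m$ as large as we wish, say $m\ge d+2$) on which the coloring is constant: the sets $S(w)$ depend only on ``which region'' $w$ is in, and in fact the homogeneity forces $S(w)$ to be \emph{the same set} $S$ for all $w$ in the sub-grid, and forces a uniform ordering behavior of the diamonds across all coordinate directions.

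\textbf{Extracting the contradiction.} On the homogeneous sub-grid, let $k_0\in S$ (so $y_w>w$ in $L_{k_0}$ for every grid point $w$) and $k_1\notin S$ (so $w>y_w$ in $L_{k_1}$ for every grid point $w$). I claim that the $d+1$ linear extensions, restricted to $W$, collapse: because of the homogeneous diamond behavior, $L_{k_0}$ is forced along every coordinate direction to agree with the ``coordinate-sum'' or to be order-isomorphic in a way that makes it behave, on the sub-grid, like one of the $d$ standard linear extensions of $\bfm^d$ — and similarly the homogeneity makes some pair $L_k,L_{k'}$ redundant on the grid, so that effectively only $d$ of the $L_k$'s matter for realizing $\bfm^d$; but then the incomparable pair $(w,y_w)$ for a suitable grid point $w$ — or rather an incomparable pair \emph{inside} $\bfm^d$ that needs the ``missing'' linear extension to be reversed, combined with the diamond constraint that that same $L_k$ already has its order forced by $S=S(w)$ — cannot be reversed by any $L_k$, contradicting that $\cgF$ is a realizer. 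More precisely: pick an incomparable pair $(a,b)$ in $\bfm^d$ with $\dim(\bfm^d)=d$ witnessing that $d$ linear extensions are needed; the homogeneity shows the restrictions $L_k|_{\bfm^d}$ use only $d$ distinct ``patterns'', so some pair $(a,b)$ with $a\parallel b$ in $\bfm^d$ has $a<b$ in every $L_k|_{\bfm^d}$, hence $a<b$ in $P$, a contradiction.

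\textbf{Main obstacle.} The delicate point is the last paragraph: turning ``homogeneous coloring'' into ``only $d$ of the $d+1$ linear extensions are useful on $W$''. The role of $S(w)\equiv S$ being constant is exactly to pin down \emph{one} coordinate $k_1\notin S$ whose linear extension $L_{k_1}$ has, at every grid point, its diamond forced ($w>y_w$); the real content is then to argue that this forcing, propagated through the merge structure and the homogeneity in all $d$ coordinate directions, makes $L_{k_1}$ (restricted to the sub-grid) \emph{extend one of the other $L_k$'s} or otherwise be reversal-equivalent to one of them on $W$, so that $\{L_k|_W : k\neq k_1\}$ — only $d$ of them — already realize $\bfm^d$, contradicting $\dim(\bfm^d)=d$ being tight. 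Getting the precise combinatorial lemma that ``constant $S$ + homogeneous diamond order $\Rightarrow$ one linear extension is redundant on the grid'' right, and choosing exactly which finite local data to feed into the Product Ramsey Theorem so that this lemma is available, is where the work lies; everything else (the diamond commitment lemma, the $\dim(\bfm^d)=d$ input, Lemma~\ref{lem:alt-cycle} if needed) is standard.
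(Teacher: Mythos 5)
Your overall strategy---color a bounded amount of order-theoretic data attached to each grid in $\bfn^d$, invoke the Product Ramsey Theorem to get a homogeneous subgrid, and then derive a contradiction---is exactly the paper's strategy. But the heart of the argument, namely \emph{which} incomparable pairs create the tension and \emph{why} homogeneity yields a contradiction, is missing, and the contradiction you do propose is a non sequitur. You argue that homogeneity makes one of the $d+1$ linear extensions ``redundant'' on the subgrid, so that only $d$ patterns remain on a copy of $\bfm^d$, and conclude that some incomparable pair of $\bfm^d$ goes unreversed. That conclusion does not follow: $\dim(\bfm^d)=d$, so $d$ linear extensions can (and generically do) realize $\bfm^d$; having only $d$ effective restrictions to the subgrid contradicts nothing. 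You would need to get down to $d-1$ effective extensions, or exhibit a concrete unreversed pair, and neither is supplied. Moreover, the data you feed into the coloring---the sets $S(w)=\{k: y_w>w \text{ in } L_k\}$ for single diamonds---is too local: each pair $(w,y_w)$ can be reversed independently in some $L_k$ with no global obstruction, so constancy of $S(w)$ alone cannot force a contradiction.

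The paper's proof gets its leverage from pairs \emph{between different diamonds}. For a $\mathbf{2}^d$-grid $g$ it singles out the $2d$ ``corner'' points $a_1,\dots,a_d,b_1,\dots,b_d$ (with $a_i$ maximal in coordinate $i$ and minimal elsewhere, dually for $b_i$), which form a standard example $S_d$, as do the attached diamond points $x_{a_i}$ and $z_{b_i}$. The coloring records, for each $j$, the least index $\alpha_j$ with $x_{a_j}>z_{b_j}$ in $L_{\alpha_j}$; by Proposition~\ref{pro:Sd} these $d$ indices are pairwise distinct, so after relabeling $\beta_j=j$ and only one index is left over. Two further coordinates of the color record the least indices reversing the long-range pairs $(x_c,y_{d})$ and $(y_c,z_{d})$ between the diamonds at the bottom corner $c$ and top corner $d$ of the grid. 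A shift argument over three homogeneous levels per coordinate shows these two indices are distinct, so by pigeonhole one of them collides with some $\beta_j\le d$, and a six-term cycle such as $c>x_c>y_d>x_d>z_e>e>c$ in the single extension $L_j$ gives the contradiction. None of this machinery---the embedded standard examples via the attached diamonds, the cross-diamond incomparabilities, the three-level shift, and the pigeonhole collision---appears in your proposal, and the ``redundancy lemma'' you defer to is both unproven and, even if true, insufficient for the conclusion you draw from it.
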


\begin{proof}
We must first gather some necessary tools from Ramsey theory.
In particular, we need a special case of a result which has become known as the Product Ramsey Theorem and appears in the classic text \cite{GRS-book} as Theorem~5 on page 113.
However, we will use slightly different notation in discussing this result.

When $T_1,T_2,\ldots,T_d$ are $k$-element subsets of $X_1,X_2,\ldots,X_d$, respectively, we refer to the product $g=T_1\times T_2\times\cdots\times T_d$ as a \emph{$\bfk^d$-grid} in $X_1\times X_2\times\cdots\times X_d$.
Here is a formal statement of the version of the Product Ramsey Theorem we require for our proof.

\begin{theorem}
\label{thm:prod-ramsey}
For every\/ $4$-tuple\/ $(r,d,k,m)$ of positive integers with\/ $m\geq k$, there is an integer\/ $n_0\geq k$ such that if\/ $|X_i|\geq n_0$ for every\/ $i=1,2,\ldots,d$, then whenever we have a coloring\/ $\phi$ which assigns to each\/ $\bfk^t$-grid\/ $g$ in\/ $X_1\times X_2\times\cdots\times X_d$ a color\/ $\phi(g)$ from a set\/ $R$ of\/ $r$ colors, then there is a color\/ $\alpha\in R$ and there are\/ $m$-element subsets\/ $H_1,H_2,\ldots,H_d$ of\/ $X_1,X_2,\ldots,X_d$, respectively, such that\/ $\phi(g)=\alpha$ for every\/ $\bfk^t$-grid\/ $g$ in\/ $H_1\times H_2\times\cdots\times H_d$.
\end{theorem}

We will apply this theorem with $k=2$, and since $k$ and $d$ are now both fixed, we will just refer to a $\bfk^d$-grid as a grid.
When $g=T_1\times T_2\times\cdots\times T_d$ is a grid, we consider the elements $w\in W$ with $w_j\in T_j$ for each $j=1,2,\ldots,d$.
Clearly, there are $2^d$ such points.
Counting the diamonds attached to these points, there are $4\cdot 2^d$ points in $P$ associated with the grid $g$.
But we want to focus on $4d+8$ of them.

First, we consider an antichain $A=\{a_1,a_2,\ldots,a_d\}$ defined as follows: for each $i,j=1,2,\ldots,d$, coordinate $j$ of $a_i$ is $\max(T_j)$ when $i=j$ and $\min(T_j)$ when $i\neq j$.
Dually, the antichain $B=\{b_1,b_2,\ldots,b_d\}$ is defined as follows: for each $i,j=1,2,\ldots,d$, coordinate $j$ of $b_i$ is $\min(T_j)$ when $i=j$ and $\max(T_j)$ when $i\neq j$.
We then note that when $1\leq i,j\leq d$, we have $a_i<b_j$ in $P$ if and only if $i\neq j$.
As a consequence, the subposet of $P$ determined by $A\cup B$ is the standard example $S_d$ discussed previously.
Note further that the points in the two antichains $\{x_{a_j}\colon 1\leq j\leq d\}$ and $\{z_{b_j}\colon 1\leq j\leq d\}$ also form a copy of $S_d$.
Furthermore, if $x_{a_j}>z_{b_j}$ in some linear extension $L$ of $P$, then $a_j>b_j$ in $L$.

Now, we consider two special points $c$ and $d$ in $W$ associated with the grid $g$ together with the points in the diamonds attached at $c$ and $d$.
First, we take $c$ with $c_j=\min(T_j)$ for each $j=1,2,\ldots,d$, and then we take $d$ with $d_j=\max(T_j)$ for each $j=1,2,\ldots,d$.
We note that $x_c<c<d<z_d$ in $P$.
However, we also note that both $(x_c,y_d)$ and $(y_c,z_d)$ are in $\Inc(P)$.

Now, suppose that $\dim(P)\leq d+1$ and that $\cgR=\{L_1,L_2,\ldots,L_{d+1}\}$ is a realizer of $P$.
We will argue to a contradiction provided $n$ is sufficiently large.
To accomplish this, we define a coloring $\phi$ of the grids in $\bfn^d$ using $(d+1)^{d+2}$ colors.
Let $g$ be a grid and consider the $2d+4$ points discussed above.
The color $\phi(g)$ will be a vector $(\alpha_1,\alpha_2,\ldots,\alpha_{d+2})$ of length $d+2$ defined as follows:
\begin{enumerate}
\item for $j=1,2,\ldots,d$, $\alpha_j$ is the least index $\alpha$ for which $x_{a_j}>z_{b_j}$ in $L_\alpha$;
\item $\alpha_{d+1}$ is the least index $\alpha$ for which $x_c>y_d$ in $L_\alpha$;
\item $\alpha_{d+2}$ is the least index $\alpha$ for which $y_c>z_d$ in $L_\alpha$.
\end{enumerate}

We apply Theorem~\ref{thm:prod-ramsey} with $r=(d+1)^{d+2}$ and $m=3$.
It follows that there is some fixed color $(\beta_1,\beta_2,\ldots,\beta_{d+2})$ such that for each $j=1,2,\ldots,d$, there is a $3$-element subset $H_j=\{u_{1,j}<u_{2,j}<u_{3,j}\}\subseteq\{0,1,\ldots,n-1\}$ so that $\phi(g)=(\beta_1,\beta_2,\ldots,\beta_{d+2})$ for all grids $g$ in $H_1\times H_2\times\cdots\times H_d$.

First, we claim that $\beta_{d+1}\neq\beta_{d+2}$.
To see this, suppose that $\beta=\beta_{d+1}=\beta_{d+2}$.
Then, let $c,c',d,d'$ be vectors with $c_j=u_{1,j}$, $d_j=c'_j=u_{2,j}$ and $d'_j=u_{3,j}$ for each $j=1,2,\ldots,d$.
Then
\[c>x_c>y_d=y_{c'}>z_{d'}>d'>c\quad\text{in $L_\beta$}.\]
Clearly, this is impossible.

In view of our earlier remarks concerning standard examples, we may relabel the linear extensions in the realizer so that $\beta_j=j$ for each $j=1,2,\ldots,d$.
Given the fact that $\beta_{d+1}\neq\beta_{d+2}$, (at least) one of $\beta_{d+1}$ and $\beta_{d+2}$ is in $\{1,2,\ldots,d\}$.
We complete the argument assuming that $1\leq j=\beta_{d+1}\leq d$, noting that the other case is analogous.

Now, let $c,d,e$ be points in $W$ defined as follows.
First, set $c_j=u_{1,j}$, $e_j=u_{2,j}$ and $d_j=u_{3,j}$.
Then, for each $i=1,2,\ldots,d$ with $i\neq j$, set $c_i=u_{1,i}$, $d_i=u_{2,i}$ and $e_i=u_{3,i}$.
Note that $c<d$ and $c<e$ in $P$.
It follows then that
\[c>x_c>y_d>x_d>z_e>e>c\quad\text{in $L_j$},\]
where the inequality $x_d>z_e$ follows from the fact that $\phi(g)=(\beta_1,\beta_2,\ldots,\beta_{d+2})$ for the grid $g=\{u_{2,1},u_{3,1}\}\times\{u_{2,2},u_{3,2}\}\times\cdots\times\{u_{2,d},u_{3,d}\}$.
The contradiction completes the proof that the upper bound in our main theorem is best possible.
\end{proof}

\section{Closing comments}
\label{sec:closing}

There are two other instances where the Product Ramsey Theorem has been applied to combinatorial problems for posets, although, to be completely accurate, the first only uses it implicitly.
The following two inequalities are proved in \cite{Tro75}.

\begin{theorem}
\label{thm:max(P)}
Let\/ $P$ be a poset which is not an antichain, and let\/ $w$ be the width of the subposet\/ $P-\Max(P)$, where\/ $\Max(P)$ is the set of maximal elements of\/ $P$.
Then\/ $\dim(P)\leq w+1$.
\end{theorem}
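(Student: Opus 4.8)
The plan is to build a realizer of $P$ of size $w+1$ directly. First I would put $D=P-\Max(P)$; this is nonempty since $P$ is not an antichain, so $w=\operatorname{width}(D)\ge 1$, and by Dilworth's theorem $D$ decomposes into chains $C_1,\dots,C_w$. The idea is to use one linear extension per chain, each pushing its chain as high as it will go, together with one cheap extra extension whose only job is to sort out the maximal elements.

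For each $k$, let $R_k=\{(x,y)\in\Inc(P):x\in C_k\}$. I would first check that $R_k$ is reversible; by Lemma~\ref{lem:alt-cycle} it suffices to rule out an alternating cycle $\{(x_i,y_i)\colon 1\le i\le n\}\subseteq R_k$. In such a cycle every $x_i$ lies on the chain $C_k$, so the finite set $\{x_1,\dots,x_n\}$ has a least element; picking $i$ with $x_i$ least and combining $x_i\le x_{i-1}$ with the alternating-cycle inequality $x_{i-1}\le y_i$ yields $x_i\le y_i$, contradicting $(x_i,y_i)\in\Inc(P)$. Hence $R_k$ is reversible, and I fix a linear extension $L_k$ of $P$ with $x>y$ in $L_k$ whenever $x\in C_k$ and $x$ is incomparable to $y$ in $P$; informally, $L_k$ places every element of $C_k$ above every element incomparable to it.

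Next I would let $\pi$ be the order that $L_1$ induces on $\Max(P)$ and take $L_{w+1}$ to be any linear extension of $P$ in which every element of $D$ precedes every element of $\Max(P)$ and the elements of $\Max(P)$ occur in the reverse of the order $\pi$. Such an extension exists because $\Max(P)$ is an antichain and every comparability of $P$ between $D$ and $\Max(P)$ runs upward from $D$ into $\Max(P)$ (nothing is strictly above a maximal element), so placing all of $D$ beneath all of $\Max(P)$ destroys no relation of $P$.

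It then remains to verify that $\{L_1,\dots,L_{w+1}\}$ reverses every pair $(x,y)\in\Inc(P)$. If $x\in D$, say $x\in C_k$, then $x>y$ in $L_k$. If $x\in\Max(P)$ and $y\in D$, then $x>y$ in $L_{w+1}$, since there $D$ lies below $\Max(P)$. And if $x,y\in\Max(P)$, then exactly one of $L_1$ and $L_{w+1}$ has $x>y$, according to whether $x$ precedes $y$ in $\pi$ or not. This gives $\dim(P)\le w+1$. The argument is short and uses no Ramsey-theoretic machinery; the one point that needs care is the choice of orientations — lifting the chains \emph{upward} (rather than pushing them down) is exactly what lets the single remaining extension get away with handling nothing beyond the downward incomparabilities out of $\Max(P)$ and the reversed ordering of the maximal elements, since every incomparability inside $D$ and between $D$ and $\Max(P)$ has already been reversed by $L_1,\dots,L_w$.
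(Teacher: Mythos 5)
The paper does not actually prove this theorem here---it is quoted from \cite{bib:Trot-2} with the remark that it admits a quite simple proof---and your argument is essentially that standard proof: one linear extension per chain of a Dilworth decomposition of $P-\Max(P)$, each lifting its chain above everything incomparable to it, plus one final extension placing $\Max(P)$ on top in the order reverse to the one induced by $L_1$. Your proof is correct as written: the alternating-cycle check for each $R_k$ is sound (the first coordinates all lie on the chain $C_k$, so the least of them would be below its own partner, contradicting incomparability), the existence of $L_{w+1}$ is properly justified since nothing of $P$ lies strictly above a maximal element, and the case analysis on the location of an incomparable pair $(x,y)$ is exhaustive.
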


\begin{theorem}
Let\/ $A$ be an antichain in a poset\/ $P$.
If\/ $P-A\neq\emptyset$ and\/ $w$ is the width of\/ $P-A$, then\/ $\dim(P)\leq 2w+1$.
\end{theorem}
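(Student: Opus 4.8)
The plan is to build a realizer of\/ $P$ of size\/ $2w+1$ directly. Set\/ $Q=P-A$ and, by Dilworth's theorem, fix a partition of\/ $Q$ into\/ $w$ chains\/ $C_1,C_2,\dots,C_w$. Every incomparable pair of\/ $P$ lies in one of three classes: (A) both entries in\/ $Q$; (B) one entry in\/ $A$ and one in\/ $Q$; (C) both entries in\/ $A$. I would use\/ $2w$ linear extensions, obtained by pushing each chain\/ $C_k$ to the bottom or to the top of the order, to reverse the pairs of class (A) together with both orientations of the pairs of class (B), and one further linear extension for class (C).

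For the ``bottom'' extensions: given\/ $k$, let\/ $P^{(k)}$ be the transitive closure of\/ $P$ together with all pairs\/ $(c,x)$ with\/ $c\in C_k$, $x\in P\setminus C_k$, and\/ $c$ incomparable to\/ $x$ in\/ $P$, and let\/ $L_k$ be any linear extension of\/ $P^{(k)}$. The relation\/ $P^{(k)}$ is acyclic: in a hypothetical cycle the segments between successive uses of the adjoined pairs are\/ $P$-paths, and replacing each such segment by the\/ $P$-comparability of its endpoints --- and using that every adjoined pair has its tail in the chain\/ $C_k$ --- yields a cyclic chain of strict inequalities inside\/ $C_k$, which is impossible. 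So\/ $L_k$ is a linear extension of\/ $P$ with\/ $c<x$ in\/ $L_k$ whenever\/ $c\in C_k$, $x\notin C_k$, and\/ $c$ is incomparable to\/ $x$; the dual construction (pushing\/ $C_k$ to the top) gives a linear extension\/ $L_k'$ with\/ $c>x$ in the same situation. Then a class-(A) pair\/ $\{x,y\}$ has its entries in two distinct chains\/ $C_k\ni x$ and\/ $C_j\ni y$, so it appears in one order in\/ $L_k$ and in the opposite order in\/ $L_j$; a class-(B) pair\/ $\{a,c\}$ with\/ $c\in C_k$ appears in opposite orders in\/ $L_k$ and\/ $L_k'$.

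For class (C) I would use that\/ $A$ is an antichain of\/ $P$, which implies that \emph{every} linear order on\/ $A$ is the restriction to\/ $A$ of some linear extension of\/ $P$: adjoining such an order to\/ $P$ and taking the transitive closure creates no cycle, by an even simpler collapsing argument, since the adjoined relations only compare elements of\/ $A$ with one another. So let\/ $\prec$ be the linear order that\/ $L_1$ happens to induce on\/ $A$, and let\/ $L_0$ be any linear extension of\/ $P$ whose restriction to\/ $A$ is the reverse of\/ $\prec$; then each pair from\/ $A$ appears in one order in\/ $L_1$ and in the other in\/ $L_0$. The family\/ $\{L_1,\dots,L_w,L_1',\dots,L_w',L_0\}$ has size\/ $2w+1$ and is a realizer of\/ $P$, so\/ $\dim(P)\le 2w+1$.

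The one delicate point is the bookkeeping. Handling the\/ $A$--$Q$ interaction chain by chain already spends\/ $2w$ extensions, and the antichain\/ $A$ on its own needs two, for an apparent total of\/ $2w+2$; the saving of one extension comes from \emph{not} committing to the order\/ $\prec$ on\/ $A$ in advance, but reading it off an extension that has already been built, using only that the reverse of a linear order on an antichain is still realizable over\/ $P$. The two acyclicity claims --- for\/ $P^{(k)}$ and for the\/ $A$-order extension --- are where the hypotheses genuinely enter (that\/ $A$ is an antichain and that each\/ $C_k$ is a chain), so I would write those out with care.
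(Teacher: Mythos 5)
Your argument is correct. The paper itself does not prove this theorem---it is stated in the closing comments as a known result, cited to Trotter's 1975 paper \emph{Inequalities in dimension theory for posets}---so there is no in-paper proof to compare against; your construction is essentially the classical one from that reference. The three ingredients all check out: (i) for a chain $C_k$ of $Q=P-A$, the relation obtained by adjoining all pairs $(c,x)$ with $c\in C_k$ incomparable to $x\notin C_k$ is acyclic, since in a putative cycle the tails $c_1,c_2,\dots,c_s\in C_k$ of the adjoined pairs would satisfy $c_i<c_{i+1}$ cyclically (the segment from the head $x_i$ to the next tail $c_{i+1}$ gives $x_i\le c_{i+1}$ in $P$, and $c_{i+1}\le c_i$ would force $x_i<c_i$, contradicting incomparability); (ii) dually for pushing $C_k$ to the top; and (iii) any linear order on the antichain $A$ is realizable over $P$, because an adjoined cycle would force two distinct elements of $A$ to be comparable. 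The bookkeeping is also right: the $2w$ extensions $L_k,L_k'$ handle all $Q$--$Q$ and $A$--$Q$ incomparabilities in both orientations, and the trick of letting $L_0$ reverse the order that $L_1$ happens to induce on $A$ legitimately saves the extension that a naive count would spend, giving exactly $2w+1$.
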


Both results admit quite simple proofs, and the only real challenge is to show that they are best possible.
An explicit construction is given in \cite{Tro75} for a family of posets showing that Theorem~\ref{thm:max(P)} is tight, but the construction for the second is far more complicated and deferred to a separate paper \cite{Tro74}.
Readers who are familiar with the details of this construction will recognize that it is an implicit application of the Product Ramsey Theorem.

The second application appears in \cite{FFT99} where it is shown that there is a \emph{finite} $3$-dimensional poset which cannot be represented as a family of spheres in Euclidean space---of any dimension---ordered by inclusion.
While it may in fact be the case that such posets exist with only a few hundred points, the proof produces an example which is extraordinarily large.
This results from the fact that a further strengthening of the Product Ramsey Theorem to a lexicographic version (see \cite{FG93}) is required.

We consider it a major challenge to construct examples of modest size for each of these three problems to replace the enormous posets resulting from the application of the Product Ramsey Theorem.

\end{document}